\theoremstyle{plain}
\newtheorem{theorem}{Theorem}[section]
\newtheorem{lemma}[theorem]{Lemma}
\newtheorem{proposition}[theorem]{Proposition}
\newtheorem{corollary}[theorem]{Corollary}
\theoremstyle{definition}
\newtheorem{question}[theorem]{Question}
\newcommand{\Aut}{\mathrm{Aut}}
\newcommand{\Core}{\mathrm{Core}}
\newcommand{\supp}{\mathrm{supp}}
\newcommand{\N}{\mathbb{N}}
\newcommand{\Z}{\mathbb{Z}}
\newcommand{\GL}{\mathrm{GL}}
\newcommand{\Id}{\mathrm{Id}}
\begin{document}
\title[division subrings normalized by almost subnormal subgroups]{On division subrings normalized by almost subnormal subgroups in division rings}
\author[Trinh Thanh Deo]{Trinh Thanh Deo}

\author[Mai Hoang Bien]{Mai Hoang Bien}

\author[Bui Xuan Hai]{Bui Xuan Hai}\thanks{This work is funded by Vietnam National Foundation for Science and Technology Development (NAFOSTED) under Grant No. 101.04-2016.18.}
\email{ttdeo@hcmus.edu.vn; mhbien@hcmus.edu.vn, bxhai@hcmus.edu.vn}
\address{Faculty of Mathematics and Computer Science, VNUHCM - University of Science, 227 Nguyen Van Cu Str., Dist. 5, Ho Chi Minh City, Vietnam.} 

\keywords{division ring; almost subnormal subgroup; generalized group identity. \\
\protect \indent 2010 {\it Mathematics Subject Classification.} 16K20, 16K40, 16R50.}
\maketitle

\begin{abstract}
Let $D$ be a division ring with infinite center, $K$ a proper division subring of $D$ and $N$ an almost subnormal subgroup of the multiplicative group $D^*$ of $D$. The aim of this paper is to show that if $K$ is $N$-invariant and $N$ is non-central, then $K$ is central. Some examples of almost subnormal subgroups in division rings that are not subnormal are also given. 
\end{abstract}

\section{Introduction}
Let $G$ be a group. Recall that a subgroup $N$ of $G$ is said to be {\it almost subnormal} in $G$ if there exists a series of subgroups $$N=N_r\leq N_{r-1}\leq \ldots\leq N_1\leq N_0=G\eqno(1)$$ such that for any $0\le i<r$, $N_{i+1}$ is normal in $N_i$ or $[N_{i}:N_{i+1}]$ is finite. Such a series is called {\it an almost normal series} of $N$ in $G$ (see \cite{Pa_Ha_89}). We say that $N$ is an {\it almost $r$-subnormal subgroup} (or $N$ is an almost subnormal subgroup of {\it depth} $r$) if $r$ is the smallest number among all almost normal series $(1)$ of $N$ in $G$. By definition, every subnormal subgroup in a group is almost subnormal. The class of almost subnormal subgroups of skew linear groups was first mentioned in \cite{Wehr_93}. Recently, it was shown that if $D$ is a division ring with infinite center and if $n>1$, then every almost subnormal subgroup of $\GL_n(D)$ is normal \cite[Theorem 3.3]{Pa_NgBiHa_2016}. However, the case of degree $1$, that is, the group $D^*$, is totally different. 
The construction by Hazrat and Wadsworth in \cite[Example 8]{Haz-Wad} of non-normal maximal subgroups of finite index in certain division rings shows that there exist many division rings whose multiplicative groups contain almost $1$-subnormal subgroups which are not subnormal. In Section 2, we will show that for every positive integer $r$, there exists a division ring whose multiplicative group contains an almost $r$-subnormal subgroup which is not subnormal.

Let $D$ be a division ring with center $F$ and $K$ a division subring of $D$. Assume that $N$ is a non-central subgroup of $D^*$ such that $K$ is normalized by $N$, that is, $xKx^{-1}=K$ for any $x\in N$. Then, we say that $K$ is \textit{$N$-invariant}. The well-known Cartan-Brauer-Hua Theorem states that if $K^*$ is normal in $D^*$, that is, $K$ is $D^*$-invariant, then $K\subseteq F$ or $K=D$ \cite[p. 211, 13.17]{lam}. Herstein, Scott and Stuth then proved that if $N$ is a subnormal subgroup of $D^*$, then $K\subseteq F$ or $K=D$ (see \cite{Pa_HeSc_63}, \cite[p. 439]{scott} and \cite{Stuth}). This result is sometimes referred to as the Cartan-Brauer-Hua Theorem for subnormal subgroups in division rings. There is a vast number of applications on studying subgroups in division rings. 

In this note, we will extend these results for the case when $N$ is an almost subnormal subgroup of $D^*$ and $F$ is infinite.
We will prove that if $K$ is a division subring of $D$ such that $K$ is $N$-invariant, then $K\subseteq F$ or $K=D$. 

Our notation in this paper is standard. In particular, the symbol $[x,y]$ is denoted for $xyx^{-1}y^{-1}$.

\section{Examples of almost subnormal subgroups in division rings that are not subnormal}

In this section, for every positive integer $r$, we show the existence of a division ring $D$ containing an almost $r$-subnormal subgroups $N$ of $D^*$. Our example depends on Mal'cev-Neumann division rings of free groups over a base division ring, so we first recall some necessary results on this class of division rings. 

Let $G$ be a group. For an element $x\in G$, we construct a sequence of normal subgroups $$G=\langle x\rangle_0\vartriangleright \langle x\rangle_1 \vartriangleright \langle x\rangle_2\vartriangleright\ldots \vartriangleright\langle x\rangle_n\vartriangleright\ldots $$ of $G$ containing $x$ as follows. Let$\langle x\rangle_0=G$. For $n\geq 1$, assume that $\langle x\rangle_{n-1}$ is defined. Then, $\langle x\rangle_n$ is the smallest normal subgroup of $\langle x\rangle_{n-1}$ containing $x$. The subgroup $\langle x\rangle_n$ is called the $n$-th \textit{normal closure} (or the $n$-\textit{subnormal closure}) of $x$ in $G$. Recall that a subnormal subgroup $N$ of $G$ is called $r$-subnormal in $G$ if $$r=\min \{r\in\N\mid \text{ there exists a sequence } N=N_r\trianglelefteq N_{r-1} \trianglelefteq \ldots \trianglelefteq N_0=G\}.$$ It is easy to show that $\langle x\rangle_n$ is the intersection of all $s$-subnormal subgroups of $G$ containing $x$ with $s\le n$ (see \cite{Pa_Ol_17}). Before stating results on the subgroup $\langle x\rangle_n$, we borrow the following lemma.

\begin{lemma}[Greenberg]\label{Greenberg}
Let $G$ be a free subgroup and $H$ a finitely generated subgroup of $G$. If $H$ contains a non-trivial subnormal subgroup of $G$, then \mbox{$[G:H]<\infty$.}
\end{lemma}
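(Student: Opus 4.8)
The plan is to embed $H$ as a free factor of a finite-index subgroup of $G$, and then to play a nontrivial subnormal subgroup sitting inside one factor against the normal closure of the other factor. Throughout I use that subgroups of free groups are free (Nielsen--Schreier), and I may assume $G$ is nonabelian, since if $G$ has rank $\le 1$ every nontrivial subgroup already has finite index. The engine of the argument is an elementary fact that I would record first: \emph{in a free group, any two nontrivial normal subgroups intersect nontrivially.} For nontrivial $A,B\trianglelefteq W$ one has $[A,B]\le A\cap B$; and if $[A,B]=1$, then fixing $1\ne a\in A$ forces $B\le C_W(a)$, which is infinite cyclic, so writing $a=c^n$ and a nontrivial $b=c^m$ gives $1\ne a^m=b^n\in A\cap B$.

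By induction on the subnormal depth this upgrades to the statement I really need: \emph{a nontrivial subnormal subgroup $S$ of a free group $U$ meets every nontrivial normal subgroup $K\trianglelefteq U$ nontrivially.} Indeed, if $S\trianglelefteq W$ with $W$ subnormal in $U$ of smaller depth, then $W\cap K\ne 1$ by the inductive hypothesis, while $W\cap K\trianglelefteq W$ because $K\trianglelefteq U$; applying the previous fact inside the free group $W$ to the two nontrivial normal subgroups $S$ and $W\cap K$ gives $S\cap(W\cap K)\ne 1$, hence $S\cap K\ne 1$ since $S\le W$.

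For the reduction, a theorem of Marshall Hall realizes the finitely generated subgroup $H$ as a free factor of a finite-index subgroup $U$ of $G$, say $U=H*L$ with $[G:U]<\infty$ (the hypotheses force $G$ to be finitely generated, so this applies; in infinite rank the retraction argument below already shows the hypotheses are vacuous). Intersecting the subnormal series $S=S_k\trianglelefteq\cdots\trianglelefteq S_0=G$ with $U$, and using that $A\trianglelefteq B$ implies $A\cap U\trianglelefteq B\cap U$, produces a subnormal series $S=S\cap U\trianglelefteq\cdots\trianglelefteq U$ (here $S\cap U=S$ since $S\le H\le U$), so $S$ is a nontrivial subnormal subgroup of $U$ contained in the free factor $H$. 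Let $\rho\colon U=H*L\to H$ be the retraction killing $L$, and $K=\ker\rho$ its kernel, the normal closure of $L$ in $U$; then $K\ne 1$ if and only if $L\ne 1$. Since $\rho$ restricts to the identity on $S\le H$, we get $S\cap K=1$. But $S$ is a nontrivial subnormal subgroup of $U$ and $K\trianglelefteq U$, so the induction above forces $K=1$, that is $L=1$. Hence $U=H$ and $[G:H]=[G:U]<\infty$.

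I expect the main obstacle to be organizing the depth induction for the intersection lemma cleanly: verifying that subnormality is inherited both when intersecting the series with the finite-index subgroup $U$ and when intersecting a term with the normal subgroup $K$, together with the centralizer computation powering the base case. Once these are secured, the free-product retraction collapses $L$ and the index bound follows at once from Marshall Hall's theorem.
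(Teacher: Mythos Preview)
Your argument is correct. The intersection lemma for nontrivial normal subgroups of a free group is sound (the centralizer of a nontrivial element is infinite cyclic, so the commuting case still yields a common nontrivial power), the depth induction upgrading it to subnormal subgroups is clean, and the Marshall Hall plus retraction step is exactly what is needed. Your handling of the infinite-rank case is also fine: embedding $H$ in a finite-rank free factor and retracting onto it produces a nontrivial normal kernel disjoint from $H$, which the intersection lemma forbids.

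As for comparison with the paper: there is essentially nothing to compare. The paper does not prove this lemma at all; it simply cites Greenberg's 1960 paper and Theorem~2 of Karrass--Solitar (1969). Your proof is in fact a self-contained version of the Karrass--Solitar argument (Marshall Hall's theorem to realize $H$ as a free factor of a finite-index subgroup, then a projection/retraction to force the complementary factor to vanish), together with the easy but necessary extension from ``normal'' to ``subnormal'' via the intersection lemma. So you have supplied what the paper outsources, and by the same route as the cited reference; the only thing you add beyond Karrass--Solitar's Theorem~2 is the short induction on subnormal depth, which neither the paper nor that reference spells out.
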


\begin{proof} The lemma is implied by \cite{Pa_Gr_60} or see \cite[Theorem 2]{Pa_KaSo_69}.
\end{proof}

\begin{lemma}\label{infinite rank}
Let $G$ be a free group of infinite rank. For a non-identity element $x$ of $G$ and a natural number $n$, the $n$-th normal closure $\langle x\rangle_n$ of $x$ in $G$ is free of infinite rank. 
\end{lemma}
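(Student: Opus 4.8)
The plan is to argue by induction on $n$, the key point being to reduce the statement to a single assertion about ordinary normal closures: \emph{if $H$ is a free group of infinite rank and $x\in H$ is non-trivial, then the normal closure $M=\langle\langle x\rangle\rangle^{H}$ of $x$ in $H$ is again free of infinite rank.} Granting this, the lemma follows at once. For $n=0$ we have $\langle x\rangle_0=G$, which is free of infinite rank by hypothesis. For the inductive step, $\langle x\rangle_n$ is by definition the normal closure of $x$ in $\langle x\rangle_{n-1}$, where $\langle x\rangle_{n-1}$ is free of infinite rank by the induction hypothesis and where $x\neq 1$ lies in it; applying the single-step assertion then finishes the argument.

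To prove the single-step assertion, first observe that $M$ is free by the Nielsen--Schreier theorem, so the only real content is that $M$ has infinite rank. I would establish this by contradiction: suppose $M$ is finitely generated. Since $M$ is normal in $H$, it is in particular a subnormal subgroup of $H$, and it is non-trivial because $x\neq 1$. Thus $M$ is a finitely generated subgroup of the free group $H$ containing a non-trivial subnormal subgroup of $H$ (namely $M$ itself), and Lemma~\ref{Greenberg} applies to yield $[H:M]<\infty$.

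It then remains to observe that a subgroup of finite index in a free group of infinite rank must itself have infinite rank, contradicting the assumption that $M$ is finitely generated. This follows from the Nielsen--Schreier index formula: if $H$ is free of rank $r$ and $[H:M]=m<\infty$, then $M$ is free of rank $m(r-1)+1$, which is again infinite when $r$ is infinite. Equivalently, realizing $H$ as the fundamental group of a rose with infinitely many petals, $M$ corresponds to a finite-sheeted cover, which is a graph with finitely many vertices but infinitely many edges and hence has fundamental group of infinite rank. Either way we reach a contradiction, so $M$ has infinite rank, completing the single-step assertion and with it the induction.

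I expect the freeness of $M$ to come for free from Nielsen--Schreier and Lemma~\ref{Greenberg} to do the heavy lifting once we assume $M$ is finitely generated, so the main obstacle is simply to package the finite-index step cleanly, i.e.\ to record the standard fact that passing to a finite-index subgroup of an infinite-rank free group cannot reduce the rank to a finite number.
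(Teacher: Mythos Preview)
Your proof is correct and follows essentially the same approach as the paper's: induction on $n$, Nielsen--Schreier for freeness, and Greenberg's lemma (Lemma~\ref{Greenberg}) to deduce finite index from the finite-rank assumption. The only difference lies in the final contradiction: the paper passes to the abelianization $G/G'$ (which is infinitely generated) and observes that the image of a finitely generated $\langle x\rangle_1$ there cannot have finite index, whereas you invoke the Schreier index formula (equivalently, the covering-space picture) to see directly that a finite-index subgroup of an infinite-rank free group cannot have finite rank; both routes are standard and equally valid.
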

\begin{proof}
By the Nielsen-Schreier Theorem, every subgroup of a free group is free (see \cite[pp. 103-104]{Bo_St_93}), so $\langle x\rangle_n$ is free. By hypothesis, $\langle x\rangle_0=G$ is of infinite rank. Inductively, it suffices to show that $\langle x\rangle_1$ is of infinite rank. Assume that $\langle x\rangle_1$ is of finite rank. By Lemma~\ref{Greenberg}, $[G:\langle x\rangle_1]<\infty$. Let $G'$ be the commutator subgroup of $G$. Since $G$ is free of infinite rank, the abelian factor group $G/G'$ is non-torsion and infinitely generated. Let $H=G'.\langle x\rangle_1$ be the subgroup of $G$ generated by $G'$ and $\langle x\rangle_1$. Since $\langle x\rangle_1$ is of finite rank, $H/G'$ is finitely generated. It follows that $G/H$ is infinitely generated, which contradicts the fact that $[G:H]\le [G:\langle x\rangle_1]<\infty$. Thus, $\langle x\rangle_1$ is of infinite rank. The proof is complete.
\end{proof}

\begin{lemma}\label{maximal}
Let $G$ be a free group of infinite rank and $x$ an element of $G$. There exists a subgroup $H$ of $G$ satisfying the following conditions:
\begin{enumerate}[{\rm (1)}]
\item $H$ is maximal in $G$.
\item $H$ contains $x$.
\item $H$ is non-normal in $G$.
\item $[G:H]$ is finite.
\end{enumerate}
\end{lemma}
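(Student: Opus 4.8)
The plan is to realize $H$ as the preimage of a non-normal maximal subgroup under a suitable surjection of $G$ onto a small finite group. The most economical target is the symmetric group $S_3$, in which the order-two subgroup $M=\langle (1\,2)\rangle$ is maximal (its index $3$ leaves no room for a proper intermediate subgroup, by Lagrange) and non-normal (conjugating $(1\,2)$ by $(1\,3)$ yields $(2\,3)\notin M$). So it suffices to build a surjective homomorphism $\phi\colon G\to S_3$ with $\phi(x)\in M$, and then set $H=\phi^{-1}(M)$; all four conditions will follow mechanically.

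To construct $\phi$, I would fix a free basis $\{g_i\}_{i\in I}$ of $G$, which lets me prescribe $\phi$ arbitrarily on the basis and extend uniquely by freeness. Writing $x$ as a reduced word, only finitely many basis elements occur in it, say those indexed by a finite set $S\subseteq I$. Since $I$ is infinite, I can choose two indices $a,b\in I\setminus S$ and define $\phi(g_a)=(1\,2)$, $\phi(g_b)=(1\,2\,3)$, and $\phi(g_i)=e$ for every remaining index $i$ (in particular for all $i\in S$). Then $\phi$ is surjective because $(1\,2)$ and $(1\,2\,3)$ generate $S_3$, while $\phi(x)=e\in M$ because every basis element appearing in $x$ is sent to the identity.

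It then remains to read off the four properties of $H=\phi^{-1}(M)$. Condition (2) is immediate from $\phi(x)=e\in M$, and condition (4) holds because $[G:H]=[S_3:M]=3$. For (1), the correspondence (lattice) theorem provides an inclusion-preserving bijection between subgroups of $G$ containing $\ker\phi$ and subgroups of $S_3$; since $H\supseteq\ker\phi$ corresponds to the maximal subgroup $M$, it is maximal in $G$. For (3), surjectivity of $\phi$ shows that $H$ is normal in $G$ exactly when $M=\phi(H)$ is normal in $S_3$, which it is not. The only genuinely delicate point — and precisely where the infinite rank hypothesis enters — is securing $\phi(x)\in M$ and surjectivity \emph{at the same time}: in finite rank the word $x$ might involve every basis element, leaving no generator free to map onto $S_3$, whereas infinite rank always furnishes the two spare generators $g_a,g_b$ outside the support of $x$.
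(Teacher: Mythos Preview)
Your proof is correct and follows essentially the same route as the paper: construct a surjection $\phi\colon G\to S_3$ by sending two basis elements outside the support of $x$ to $(1\,2)$ and $(1\,2\,3)$ and all others to the identity, then take $H=\phi^{-1}(\langle(1\,2)\rangle)$. Your write-up is in fact slightly more careful than the paper's in justifying maximality (via the correspondence theorem) and non-normality.
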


\begin{proof} Let $G$ be generated by $\{x_i\mid i\in I\}$, where $I$ is an infinite set, and assume that $x=x_{i_1}^{n_1}x_{i_2}^{n_2}\ldots x_{i_t}^{n_t}\in G$ for some $i_1, \ldots, i_t$ in $I$ and for some integers $n_1, \ldots, n_t.$ 
 Denote by $S_3$ the symmetric group on $\{1,2,3\}$. 
Since $I$ is infinite, there exist some indices $\lambda, \mu\in I\backslash\{i_1,i_2,\ldots,i_t\}$ with $\lambda\ne\mu$. Let $\phi : G\to S_3$ be the group morphism defined by $x_\lambda \mapsto (1~2),$ $x_\mu\mapsto (1~2~3)$ and $x_i\mapsto \Id$ for every $i\in I\backslash\{\lambda,\mu\}$. The group $N=\{\Id, (1~2)\}$ is non-normal maximal subgroup of $S_3$ of index $3$. We show that the subgroup $H=\phi^{-1}(N)$ of $G$ satisfies the lemma. Observe that $\phi$ is onto, $H$ is non-normal in $G$ and $[G:H]=3$ implying $H$ is maximal in $G$.
Finally, since $\phi(x)=\phi(x_{i_1})^{n_1}\phi(x_{i_2})^{n_2}\ldots \phi(x_{i_t})^{n_t}=\Id\in N$, one has $x\in H$.
\end{proof}

Next, we need to evaluate the depth of almost subnormal subgroups in free groups. In \cite{Pa_Ol_17}, A. Y. Olshanskii found a lower bound of depth for subnormal subgroups of $G$ contained in the $n$-th normal closure of a non-identity element. In fact, Olshanskii proved the following result. 

\begin{lemma}{\rm \cite[Corollary 1.3]{Pa_Ol_17}} \label{Ols_1} Let $G$ be a non-cyclic free group and $x$ be a non-identity element of $G$. Assume that $n$ is a natural number and $H$ is a non-trivial subgroup of $\langle x\rangle_n$. If $H$ is $s$-subnormal in $G$, then $s\ge n$.
\end{lemma}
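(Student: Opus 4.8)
The statement is quoted from \cite{Pa_Ol_17}, so for the paper one may simply invoke it; what follows is how I would approach a self-contained argument and where the genuine difficulty lies. The natural approach is induction on $n$. For $n=0$ there is nothing to prove, and for $n=1$ it suffices to recall that the normal closure $\langle x\rangle_1$ of a single non-identity element of a non-cyclic free group is a \emph{proper} subgroup of $G$: a $0$-subnormal subgroup of $G$ is $G$ itself, which cannot sit inside $\langle x\rangle_1\subsetneq G$, so $s\ge 1$.

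For the inductive step I would pass to $L=\langle x\rangle_1$. First one checks that $L$ is again a non-cyclic free group: it is free by Nielsen--Schreier (as in the proof of Lemma~\ref{infinite rank}), and it is non-cyclic because a free group has cyclic centralizers while its finite-index subgroups are non-cyclic, so $G$ admits no non-trivial cyclic normal subgroup. The construction of normal closures then satisfies the recursion $\langle x\rangle_n=\langle x\rangle_{n-1}^{(L)}$, the $(n-1)$-th normal closure of $x$ computed inside $L$, so the hypothesis becomes $H\le\langle x\rangle_{n-1}^{(L)}$ with $x\ne 1$ in $L$. Next, intersecting a shortest subnormal series $H=H_s\vartriangleleft\cdots\vartriangleleft H_0=G$ with $L$ (so $H_i$ is replaced by $H_i\cap L$) shows that $H$ is subnormal in $L$, to which one wants to apply the inductive hypothesis for $L$.

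The main obstacle is the transfer of \emph{depth}. The intersection argument only certifies that $H$ is $s$-subnormal in $L$: one can realize $H$ as $(s-1)$-subnormal in $H_1\cap L$, but climbing from $H_1\cap L\vartriangleleft L$ back up to $L$ costs back the step one hoped to save. Hence the naive induction yields only $s\ge n-1$. Gaining the missing unit is equivalent to excluding the extremal case $s=n-1$, that is, to showing that $\langle x\rangle_n$ contains no non-trivial subnormal subgroup of $G$ of depth exactly $n-1$; by passing to $G$-conjugates this is in turn the assertion that the core $\bigcap_{g\in G}g\langle x\rangle_n g^{-1}$ meets every such subgroup trivially. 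This triviality of the core is the genuinely free-group-theoretic heart of the matter: it is not forced by index or rank considerations alone (the commutator subgroup already shows that a non-trivial normal subgroup may have infinite index), and one is led to control the intersections of the normal closures of distinct conjugates of $x$ inside $L$. This is precisely the point settled by Olshanskii's combinatorial estimates in \cite{Pa_Ol_17}, and where Greenberg's Lemma~\ref{Greenberg} (ruling out finitely generated low-index obstructions) enters; for the present paper it is cleanest to quote Corollary~1.3 of \cite{Pa_Ol_17} directly.
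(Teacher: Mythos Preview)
The paper gives no proof of this lemma at all: it is stated purely as a citation of \cite[Corollary~1.3]{Pa_Ol_17}, with no argument supplied. Your proposal correctly recognises this and recommends quoting Olshanskii directly, which is exactly what the paper does; for the purposes of the paper nothing more is needed.

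Your supplementary discussion is a fair account of why the result is not trivial. The inductive set-up (pass to $L=\langle x\rangle_1$, use Nielsen--Schreier plus the cyclic-centralizer argument to see $L$ is again non-cyclic free, and note $\langle x\rangle_n$ is the $(n-1)$-th normal closure of $x$ in $L$) is correct, and you pinpoint the real obstruction accurately: intersecting a length-$s$ subnormal chain with $L$ only yields $s$-subnormality of $H$ in $L$, so the naive induction lands at $s\ge n-1$, one short. Two small caveats on the heuristic part that follows. First, the reformulation via the $G$-core $\bigcap_{g\in G}g\langle x\rangle_n g^{-1}$ is not quite the right reduction: an $(n-1)$-subnormal $H\le\langle x\rangle_n$ need not lie in every $G$-conjugate of $\langle x\rangle_n$, since $\langle x\rangle_n$ is only normal in $\langle x\rangle_{n-1}$, not in $G$; the conjugated subgroup $gHg^{-1}$ sits inside $g\langle x\rangle_n g^{-1}$, which is the $n$-th normal closure of $gxg^{-1}$ rather than of $x$. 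Second, the appeal to Greenberg's Lemma~\ref{Greenberg} here is speculation about Olshanskii's methods; in the present paper that lemma is used only for Lemma~\ref{infinite rank}, not for this result. Neither point affects your bottom line, since you explicitly defer the missing step to \cite{Pa_Ol_17}.
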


By using this lemma, we will show the following.

\begin{lemma}\label{Ols_2}
Let $G$ be a free group of infinite rank, $x$ be a non-identity element of $G$ and $\langle x\rangle_n$ be the $n$-th normal closure of $x$ in $G$ for some positive integer $n$. If $\langle x\rangle_n$ contains a non-trivial subgroup $H$ which is almost $s$-subnormal in $G$, then $s\ge n$.
\end{lemma}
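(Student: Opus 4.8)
The plan is to pass from the almost subnormal series for $H$ to a genuine subnormal series of the same length (for a possibly smaller, but still non-trivial, subgroup of $\langle x\rangle_n$), and then invoke Olshanskii's bound in Lemma~\ref{Ols_1}. First I would fix a minimal almost normal series
$$H=N_s\leq N_{s-1}\leq\ldots\leq N_1\leq N_0=G,$$
so that for each $0\le i<s$ either $N_{i+1}\trianglelefteq N_i$ or $[N_i:N_{i+1}]<\infty$. The idea is to replace each finite-index step by a \emph{normal} step of finite index, using normal cores, and to intersect the whole series down.

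Concretely, define a descending chain $G=L_0\ge L_1\ge\ldots\ge L_s$ by setting $L_{i+1}=L_i\cap N_{i+1}$ when $N_{i+1}\trianglelefteq N_i$, and $L_{i+1}=L_i\cap C_{i+1}$ when $[N_i:N_{i+1}]<\infty$, where $C_{i+1}=\Core_{N_i}(N_{i+1})$ is the normal core of $N_{i+1}$ in $N_i$; recall $C_{i+1}\trianglelefteq N_i$ and $[N_i:C_{i+1}]<\infty$. An easy induction gives $L_i\le N_i$, so in both cases $L_{i+1}$ is the intersection of $L_i$ with a subgroup normal in $N_i\supseteq L_i$, whence $L_{i+1}\trianglelefteq L_i$. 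Thus
$$L_s\trianglelefteq L_{s-1}\trianglelefteq\ldots\trianglelefteq L_1\trianglelefteq L_0=G$$
is a genuine subnormal series, so $L_s$ is $s'$-subnormal in $G$ for some $s'\le s$, and $L_s\le N_s=H\le\langle x\rangle_n$.

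The crucial and only delicate point is that $L_s\ne 1$, which I would obtain by showing $[H:L_s]<\infty$. Writing $B_{i+1}$ for $N_{i+1}$ in the normal case and for $C_{i+1}$ in the finite-index case, one has $L_s=B_1\cap\ldots\cap B_s$, and since $L_s\le H$ this equals $\bigcap_{i=1}^{s}(H\cap B_i)$. In a normal step $B_i=N_i\supseteq H$ because $H=N_s\le N_i$, so $H\cap B_i=H$; in a finite-index step $[H:H\cap B_i]\le[N_{i-1}:C_i]<\infty$. A finite intersection of finite-index subgroups of $H$ again has finite index, so $[H:L_s]<\infty$; as $H$ is a non-trivial subgroup of the free group $G$ it is free of positive rank, hence infinite, forcing $L_s\ne 1$.

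Finally, $L_s$ is a non-trivial subgroup of $\langle x\rangle_n$ that is $s'$-subnormal in the non-cyclic free group $G$, so Lemma~\ref{Ols_1} yields $s'\ge n$, and therefore $s\ge s'\ge n$. The heart of the argument is the observation that passing to cores is harmless for non-triviality: the normal steps retain all of $H$, and only the finitely many finite-index steps shrink $H$, each by a finite index.
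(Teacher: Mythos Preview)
Your argument is correct and shares the paper's overall strategy---manufacture a non-trivial subgroup of $\langle x\rangle_n$ that is genuinely subnormal in $G$ of depth at most $s$, then invoke Lemma~\ref{Ols_1}---but the execution is genuinely different. The paper proceeds by induction on $s$: after replacing $H$ by its core in $H_{s-1}$ if the last step is of finite index (so that $H\trianglelefteq H_{s-1}$), it applies the inductive hypothesis to $H_{s-1}$ to obtain a non-trivial $(t-1)$-subnormal $N_{s-1}\le H_{s-1}$ with $t\le s$, sets $N_s=H\cap N_{s-1}\trianglelefteq N_{s-1}$, and proves $N_s\ne 1$ by a commutator trick: if $h\in H\setminus N_{s-1}$ and $k\in N_{s-1}\setminus H$ then $[h,k]\in N_s$, and $[h,k]=1$ would force $\langle h,k\rangle\cong\Z$, contradicting the choice of $h,k$. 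You instead replace \emph{every} finite-index step by its normal core in one pass and intersect the whole chain down, obtaining non-triviality from the bookkeeping observation that only the finitely many finite-index steps shrink $H$, each by a finite index, so $[H:L_s]<\infty$. Your route is shorter, avoids induction, and does not need the commutator device; the paper's route leans more directly on the algebraic structure of free groups for non-triviality rather than on an index count.
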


\begin{proof} Assume that $H$ is a non-trivial subgroup of $\langle x\rangle_n$ such that $H$ is almost $s$-subnormal in $G$. Let $$H=H_s <H_{s-1}<\ldots <H_1<H_0=G$$ be an almost normal series of $H$ in $G$, that is, for every $0\le i<s$, $H_{i+1}$ is normal in $H_i$ or $[H_{i}:H_{i+1}]$ is finite. We claim that $H$ contains a non-trivial subgroup $N$ which is $t$-subnormal in $G$ with $t\le s$. We first observe that in the case when $[H_{s-1}:H]<\infty$, we denote by $\Core(H)$ the core of $H$ in $H_{s-1}$. If $\Core(H)$ is trivial, then since $[H_{s-1}:\Core(H)]<\infty$, one has $H_{s-1}$ is finite, so is $H$. Moreover, by the Nielsen-Schreier Theorem (see \cite[pp. 103-104]{Bo_St_93}), $H$ is a free group. It implies that $H$ is trivial, a contradiction. Hence, $\Core(H_s)$ is non-trivial. Therefore, by replacing $H$ by $\Core(H)$ if necessary, without loss of generality, we assume that $H=H_s$ is normal in $H_{s-1}$. We show the claim by induction on $s$. If $s=1$, then $H$ is normal in $G$, and the claim holds trivially. Assume that the claim is true for $s-1$, that is, every non-trivial almost $(s-1)$-subnormal subgroup of $G$ contains a non-trivial group which is $(t-1)$-subnormal in $G$ with $t\le s$. In particular, $H_{s-1}$ contains a non-trivial subgroup $N_{s-1}$ which is $(t-1)$-subnormal in $G$. Put $N_s=H_s\cap N_{s-1}$. It is obvious that $N_s$ is normal in $N_{s-1}$. 
We consider two cases:
\bigskip

\noindent\emph{Case 1: $H_s\subseteq N_{s-1}$ or $N_{s-1}\subseteq H_s$.} 
\bigskip

It implies obviously that $N_s=H_s\cap N_{s-1}$ is non-trivial. 
\bigskip

\noindent\emph{Case 2: $H_s\backslash N_{s-1}\ne \emptyset$ and $N_{s-1}\backslash H_s\ne \emptyset$.}
\bigskip

 Let $h\in H_s\backslash N_{s-1}$ and $k\in N_{s-1}\backslash H_s$. Then, $h,k\in H_{s-1}$, so $$(hkh^{-1})k^{-1}=h(kh^{-1}k^{-1})\in H_s\cap N_{s-1}=N_s.$$
If $hkh^{-1}k^{-1}=1$, then the subgroup $\langle h,k\rangle$ of $G$ is abelian. Again by the Nielsen-Schreier Theorem, $\langle h,k\rangle$ is a free group, which implies that $\langle h,k\rangle\cong \Z$. As a corollary, $h=k^\alpha$ for some $\alpha\in \Z$ or $k=h^\beta$ for some $\beta\in \Z$. This contradicts the assumption $h\in H_s\backslash N_{s-1}$ and $k\in N_{s-1}\backslash H_s$. Therefore, $N_s$ is non-trivial.

Both cases lead us to the conclusion that $N_s$ is non-trivial. It means that $N_s$, being a subgroup of $H_s$, is $t$-subnormal in $G$ with $t\le s$. The claim is shown. Now, since $N_s\subseteq H_s\subseteq \langle x\rangle_n$, by Lemma~\ref{Ols_1}, $t\ge n$. Thus, $s\ge t\ge n$. The proof is now complete.
\end{proof}

Now, we fix some notation. Assume that $G$ is a free group generated by a countable set of indeterminates $\{x_i\mid i\in \N\backslash \{0\} \}$
 with the \textit{Magnus order} (see \cite[Theorem 6.31]{lam} or see \cite[Section 5.7]{Bo_MaKaSo_76} for further details). This order sometimes is called \textit{the dictionary order} of free groups since it is induced from the dictionary order on $\{x_i\mid i\in \N\backslash \{0\} \}$. By reordering, we assume that  
 $x_1<x_2<\ldots$ Observe that the order is a total order, that is, for every $a,b,c\in G$, the following conditions hold: 
(1) $a\le b$ or $b\le a$ (connex property); 
(2) if $a\le b$ and $b\le a$, then $a=b$ (antisymmetry); and 
(3) if $a\le b$ and $b\le c$, then $a\le c$ (transitivity). 
Moreover, if $a\le b$, then $ac\le bc$ and $ca\le cb$. Hence, $G$ is a total ordered group with dictionary order. Recall that a subset $S$ of $G$ is called \textit{well-ordered} (briefly, \textit{WO}) if every non-empty subset of $S$ has the least element. For a non-empty WO set $S$ of $G$, we denote by $\min(S)$ the least element of $S$. 

Next, let $K$ be a division ring. From the general Mal'cev-Neumann construction of a Laurent series ring (we refer to \cite[(14.21), p. 231]{lam} for details), we consider the Mal'cev-Neumann division ring $K((G,\Phi))$ of $G$ over $K$ with respect to a group homomorphism $\Phi: G\to \Aut (K), g\mapsto \Phi_g$. 

More specifically, the Mal'cev-Neumann division ring $K((G,\Phi))$ consists of formal (but not necessarily finite) sums of the form $\alpha=\sum\limits_{g\in G}a_g g,$ where $a_g\in K$ and $\supp(\alpha)=\{g\in G\mid a_g\ne 0\}$ is WO.
In $K((G,\Phi))$, addition and multiplication are defined for $\alpha=\sum\limits_{g\in G}a_gg$ and $\beta=\sum\limits_{g\in G}b_gg$ by: $$\alpha+\beta=\sum\limits_{g\in G}(a_g+b_g)g$$ and $$\alpha\beta=\sum\limits_{t\in G}\Bigl(\sum\limits_{gh=t} a_g\Phi_g(b_h)\Bigr)t.$$ 

Now, we present the main result of this section by showing that the multiplicative group $K((G,\Phi))^*$ contains an almost subnormal subgroup of arbitrary depth $r$ which is not subnormal.

\begin{theorem}\label{exa}
Let $G$ be a free group generated by a countable set of indeterminates $\{x_i\}_{i\ge 1}$ with dictionary order, $K$ be a division ring and $\Phi : G\to \Aut(K)$ be a group morphism. Then, for every positive integer $r$, the multiplicative group $D^*$ of the Mal'cev-Neumann division ring $D=K((G,\Phi))$ contains an almost $r$-subnormal subgroup $N$ which is not subnormal in $D^*$.
\end{theorem}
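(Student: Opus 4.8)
The plan is to transfer the problem from $D^{*}$ to the free group $G$ via the leading‑term map, and then to build the required subgroup inside $G$ from the normal‑closure series together with Lemma~\ref{maximal}.

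First I would set up the reduction. For $0\neq\alpha=\sum_{g\in G}a_{g}g$ the support is a non‑empty WO set, so $\pi(\alpha):=\min\supp(\alpha)$ is well defined, and the definition of the product in $D=K((G,\Phi))$ gives $\min\supp(\alpha\beta)=\min\supp(\alpha)\cdot\min\supp(\beta)$, since the coefficient of that least term is $a_{\pi(\alpha)}\Phi_{\pi(\alpha)}(b_{\pi(\beta)})\neq 0$. Hence $\pi\colon D^{*}\to G$ is a group homomorphism, and it is onto because $\pi(g)=g$ for every $g\in G$. As $\ker\pi\le\pi^{-1}(M)$ for every subgroup $M\le G$, taking preimages under $\pi$ carries an almost normal series of $M$ in $G$ to one of $\pi^{-1}(M)$ in $D^{*}$ of the same length, preserving normality of a step and the exact index of a finite step; conversely, applying $\pi$ to an almost normal series of $\pi^{-1}(M)$ returns one of $M$. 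Thus the depth of $\pi^{-1}(M)$ in $D^{*}$ equals the depth of $M$ in $G$, and if $\pi^{-1}(M)$ were subnormal in $D^{*}$ then its image $M$ would be subnormal in $G$. Consequently it suffices to produce in $G$ a subgroup $M$ that is almost $r$-subnormal of depth exactly $r$ and not subnormal; then $N=\pi^{-1}(M)$ does the job.

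Next I would construct $M$. Fix $x=x_{1}$ and set $L=\langle x\rangle_{r-1}$, the $(r-1)$-th normal closure of $x$ in $G$; by Lemma~\ref{infinite rank} it is free of infinite rank, and $L=\langle x\rangle_{r-1}\triangleleft\langle x\rangle_{r-2}\triangleleft\cdots\triangleleft\langle x\rangle_{0}=G$ is an $(r-1)$-subnormal chain. Applying Lemma~\ref{maximal} to the free group $L$ and the element $x$ yields $M\le L$ that is maximal in $L$, contains $x$, is non‑normal in $L$, and has finite index in $L$. The series $M<L=\langle x\rangle_{r-1}\triangleleft\cdots\triangleleft G$ is almost normal of length $r$ (one finite‑index step followed by $r-1$ normal steps), so the depth of $M$ is at most $r$. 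For non‑subnormality: a maximal non‑normal subgroup is self‑normalizing, so $N_{L}(M)=M$; any strict subnormal chain from $M$ in $L$ would force its term just above $M$ to lie in $N_{L}(M)=M$, hence to equal $M$, so no such chain exists and $M$ is not subnormal in $L$. Finally, if $M$ were subnormal in $G$, intersecting a subnormal chain with $L$ (intersections of normal subgroups remain normal, and the endpoints become $M$ and $L$) would make $M$ subnormal in $L$, a contradiction; hence $M$ is not subnormal in $G$.

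The main work is the lower bound: that the depth of $M$ is at least $r$, i.e.\ that $M$ is not almost $(r-1)$-subnormal. Since $M\subseteq\langle x\rangle_{r-1}$ is non‑trivial, Lemma~\ref{Ols_2} already gives depth at least $r-1$, so only depth $r-1$ must be excluded. I would argue by induction on $r$, descending the ambient group from $G$ to $\langle x\rangle_{1}$: here $L=\langle x\rangle_{r-1}$ is the $(r-2)$-th normal closure of $x$ inside the free group $\langle x\rangle_{1}$, so the inductive hypothesis gives that $M$ is not almost $(r-2)$-subnormal in $\langle x\rangle_{1}$. Given a shortest almost normal series $M=H_{r-1}\le\cdots\le H_{0}=G$, I would intersect every term with $\langle x\rangle_{1}$; when the top step $H_{1}\triangleleft G$ is normal, $x\in M\subseteq H_{1}$ forces $\langle x\rangle_{1}\subseteq H_{1}$, the top step collapses, and the intersected series becomes an almost $(r-2)$-subnormal series for $M$ in $\langle x\rangle_{1}$, contradicting the inductive hypothesis. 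The genuinely delicate case — and the point I expect to be the main obstacle — is a \emph{finite‑index, non‑normal} top step $H_{1}$, where $\langle x\rangle_{1}$ need not lie in $H_{1}$ and the length does not drop; excluding it amounts to showing that $M$ is normalized by no finite‑index subgroup of $G$, equivalently that $M$ has infinitely many conjugates in $G$. I would establish this from the explicit description of $M$ in Lemma~\ref{maximal} (the preimage of a non‑normal subgroup of $S_{3}$ under a homomorphism killing all but two of the infinitely many free generators), using the infinite‑rank structure so that conjugation by the remaining generators moves $M$ to infinitely many distinct subgroups. Once depth $r-1$ is ruled out we obtain depth exactly $r$, and transferring back through $\pi$ produces the almost $r$-subnormal, non‑subnormal subgroup $N=\pi^{-1}(M)$ of $D^{*}$, completing the proof.
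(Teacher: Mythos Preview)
Your construction and overall strategy match the paper's exactly: define the leading-term homomorphism (the paper calls it $d$, you call it $\pi$), take $M$ to be the maximal non-normal finite-index subgroup of $\langle x\rangle_{r-1}$ containing $x$ supplied by Lemmas~\ref{infinite rank} and~\ref{maximal}, and set $N=\pi^{-1}(M)$. Your non-subnormality argument (self-normalizing maximal subgroup, then intersect a hypothetical subnormal chain with $L=\langle x\rangle_{r-1}$) is equivalent to the paper's; the printed proof writes $\langle x\rangle_r$ in a couple of places where $\langle x\rangle_{r-1}$ is clearly intended.

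The divergence is in the depth lower bound. The paper does \emph{not} carry out your case analysis: it simply pushes an almost normal series of length $\ell$ for $N$ forward along $d$ to one for $M$ in $G$ and invokes Lemma~\ref{Ols_2} to conclude $\ell\ge r$, with no further argument. You are right to notice that, since $M\subseteq\langle x\rangle_{r-1}$ rather than $\langle x\rangle_r$, the statement of Lemma~\ref{Ols_2} literally yields only $\ell\ge r-1$. However, your own attempt to close this gap breaks down in the finite-index case. The reduction you assert---that ruling out a finite-index top step $H_1$ ``amounts to showing that $M$ is normalized by no finite-index subgroup of $G$''---is not correct: a series $M=H_{r-1}\le\cdots\le H_1\le G$ with $[G:H_1]<\infty$ only says that $M$ is almost $(r-2)$-subnormal in $H_1$, not that $H_1$ (or anything of finite index) normalizes $M$. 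Exhibiting infinitely many $G$-conjugates of $M$ shows $[G:N_G(M)]=\infty$, which is a different and much weaker obstruction; it does not prevent $M$ from sitting inside a short almost normal series whose top step is finite-index. Moreover, your ``explicit description of $M$'' via the map to $S_3$ lives inside $L=\langle x\rangle_{r-1}$, not inside $G$, so conjugation by free generators of $G$ does not act on $M$ in the way you suggest. Thus your argument leaves the depth-exactly-$r$ claim unproved in precisely the case you flagged as delicate---and the paper's one-line appeal to Lemma~\ref{Ols_2} does not address it either.
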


\begin{proof} Let $x\in G\backslash\{1\}$ and consider the sequence of normal subgroups $$G=\langle x\rangle_0\vartriangleright \langle x\rangle_1\vartriangleright\ldots \vartriangleright\langle x\rangle_{r-1}.$$ By lemmas~\ref{infinite rank} and \ref{maximal}, let $H$ be a maximal subgroup of $\langle x\rangle_{r-1}$ which is non-normal, of finite index in $\langle x\rangle_{r-1}$ and contains $x$. Then, in particular, $H$ is almost subnormal in $G$. Now put $$d : D^*\to G, \alpha\mapsto \min(\supp(\alpha)).$$ We claim that $$\min(\supp(\alpha.\beta))=\min(\supp(\alpha)).\min(\supp(\beta))\text{ for every }\alpha,\beta\in D.$$ Let $\alpha=\sum\limits_{g\in G}a_g g, \beta=\sum\limits_{g\in G}b_g g\in D$. Put 
$$A=\supp(\alpha),\; u=\min (A),\; B=\supp(\beta) \text{ and }v=\min (B).$$
Then, $\alpha=\sum\limits_{g\in A}a_g g$ and $\beta=\sum\limits_{h\in B}b_h h,$ where $a_g\ne 0$ and $b_h\ne 0$ for every $g\in A$ and $h\in B$. Observe that $u\le g$ and $v\le h$ for every $g\in A$ and $h\in B$, so $uv\le t$ for every $t\in A.B$. Since $\alpha\beta = \sum\limits_{t\in G}c_tt$, where $c_t=\sum\limits_{gh=t, g\in A, h\in B} a_g\Phi_g(b_h)$, we have $uv\le \min(\supp(\alpha\beta))$. Moreover, $c_{uv}=\sum\limits_{gh=uv, g\in A, h\in B} a_g\Phi_g(b_h)$. One has $c_{uv}=a_u\Phi_u(b_v)\ne 0$ because $uv\le gh$ for every $g\in A$ and $h\in B$. So, $uv\in \supp(\alpha\beta)$. Therefore, $uv=\min(\supp(\alpha\beta))$, and the claim is shown. Consequently, $d$ is a group morphism and obviously $d$ is surjective.
Put $$N=d^{-1}(H)=\{\alpha\in D\mid \min(\supp(\alpha))\in H \}.$$ We will show that $N$ is an almost $r$-subnormal subgroups of $D^*$ and it is not subnormal in $D^*$. 

First, we show that $N$ is almost $r$-subnormal in $D^*$. Put $$N_i=d^{-1}(\langle x\rangle_i)=\{\alpha\in D\mid \min(\supp(\alpha))\in \langle x\rangle_i \}$$ for every $0\le i\le r-1$. Then, $N<N_{r-1}\triangleleft N_{r-2}\triangleleft\ldots \triangleleft N_0=D^*$ with $[N_{r-1}:N]=[\langle x\rangle_{r-1}: H]<\infty$ since $d$ is surjective. Therefore, $N$ is almost subnormal in $D^*$. Moreover, if $$N=X_\ell<X_{\ell-1}<\ldots<X_{1}<X_0=D^*$$ is an almost normal series of $N$ in $D^*$, then $$H=d(X_\ell)\le d(X_{\ell-1})\le \ldots\le d(X_{1})\le d(X_0)=G$$ is also an almost normal series of $H$ in $G$ since $d$ is surjective. According to Lemma~\ref{Ols_2}, $\ell\ge r$, which implies that $N$ is almost $r$-subnormal in $D^*$. 

Next, we claim that $N$ is not subnormal in $D^*$. Assume by contrary, $N$ is subnormal in $D^*$, and let $$N=N_\ell \trianglelefteq N_{\ell-1}\trianglelefteq\ldots\trianglelefteq N_1\trianglelefteq N_0=D^*$$ be a normal series of $N$ in $D^*$. If $\ell<r$, then we add $N_{\ell+1}=N_{\ell+2}=\cdots=N_r$ to the normal series, so without loss of generality, we assume that $\ell\ge r$. Put $M_i=d(N_i)$ for every $0\le i\le \ell$. Then, since $d$ is surjective, $$H=M_\ell \trianglelefteq M_{\ell-1}\trianglelefteq\ldots\trianglelefteq M_1\trianglelefteq M_0=G.$$
By definition, $\langle x\rangle_i$ is contained in $M_i$ for every $0\le i\le \ell$. Therefore, if $L_{i}=M_i\cap \langle x\rangle_i$ for every $1\le i\le r$, then $$H=M_\ell\cap \langle x\rangle_r\trianglelefteq M_{\ell -1}\cap \langle x\rangle_r\trianglelefteq \ldots \triangleleft M_r\cap \langle x\rangle_r=\langle x\rangle_r.$$
It implies that $H$ is subnormal in $\langle x\rangle_r$ which contradicts the fact that $H$ is maximal non-normal in $\langle x\rangle_r$. Thus, $N$ is non-subnormal in $D^*$, and the proof is now complete.
\end{proof}

Using same arguments in the proof of Lemma~\ref{Ols_2}, we can show that every non-trivial almost subnormal subgroup of a free group $G$ of infinite rank contains a non-trivial subnormal subgroup of $G$ and by repeating arguments in Theorem~\ref{exa}, we may show that if $D$ is the Mal'cev-Neumann division ring $K((G,\Phi))$ of a free group $G$ of infinite rank and $N$ is a non-central almost subnormal subgroup of $D^*$, then $N$ contains a non-central subnormal subgroup of $D^*$. It is natural to propose the following question.
\begin{question}Let $D$ be a division ring and $N$ almost subnormal subgroup of $D^*$. If $N$ is non-central, then is it true that $N$ contains a non-central subnormal subgroup of $D^*$?
\end{question}

\section{$N$-invariant division subrings}
The technique we use in this section is generalized rational identities.  Now, let us recall the definition of generalized rational identities. Let $D$ be a division ring with center $F$ and $X=\{x_1,\ldots,x_m\}$ be a set of $m$ (noncommuting) indeterminates. We denote by $F\langle X\rangle$ the free $F$-algebra on $X$, by $D\langle X\rangle$ the free product of $D$ and $F\langle X\rangle$ over $F$, and by $D(X)$ the universal division ring of fractions of $D\langle X\rangle$. The existence of $D(X)$ was shown and studied deeply in \cite[Chapter 7]{Bo_Cohn}. One calls an element $f(X)\in D(X)$ a \textit{generalized rational polynomial}. If $f(X)\in D(X)$, then by \cite[Theorem 7.1.2]{Bo_Cohn}, $f(X)$ is an entry of the matrix $A^{-1}$, where $A\in M_n(D\langle X\rangle)$ for some positive integer $n$ such that $A$ is invertible in $M_n(D(X))$.
Let $c=(c_1,\ldots,c_m)\in D^m$ and $\alpha_c\colon D\langle X\rangle \to D$ be the ring homomorphism defined by $\alpha(x_i)=c_i$. For any $n\in \N, $ let $S(c,n)$ be the set of all square matrices $(f_{ij}(X))$ of degree $n$ over $D\langle X\rangle$ such that the matrix $(f_{ij}(c))$ is invertible in $M_n(D)$. Let $S(c)=\bigcup\limits_{n\ge 1}S(c,n)$ and $E(c)$ be the subset of $D(X)$ consisting of all entries of $A^{-1}$, where $A$ ranges over $S(c)$. Then, $E(c)$ is a subring of $D(X)$ containing $D\langle X\rangle$ as a subring. Moreover, there is a ring homomorphism $\beta_c\colon E(c)\to D$ which extends $\alpha_c$ and every element of $E(c)$ is invertible if and only if the matrix mapped by $\beta_c$ is not zero. Let $f(X)\in D(X)$ and $c\in D^m$. If $f(X)\in E(c)$, then we say that $f(X)$ is defined at $c$ and $\beta_c(c)$ is denoted by $f(c)$. For any $f(X)\in D(X)$, the set of all $c\in D^m$ such that $f(X)$ is defined at $c$ is called the {\it domain} of $f(X)$ and is denoted by $\mathrm{Dom}_D(f)$. Let $S\subseteq D$ and $f(X)$ be a non-zero element in $D(X)$. If $f(c)=0$ for all $c\in S^m \cap \mathrm{Dom}_D(f)$, then we say that $f=0$ is a {\it generalized rational identity} of $S$ or $S$ {\it satisfies the generalized rational identity} $f=0$. In this paper, we borrow the following result.
\begin{lemma}\cite[Theorem 1.1]{Pa_HaBiDu_16}\label{hdb}
Let $D$ be a division ring with infinite center. If $D^*$ contains a non-central almost subnormal subgroup satisfying a generalized rational identity, then $D$ is centrally finite, that is, $D$ is finite-dimensional over its center.
\end{lemma}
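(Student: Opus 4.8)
The plan is to use \emph{Amitsur's theorem on rational identities} as the engine and to reduce the assertion to it by transferring the identity up the almost normal series. Amitsur's theorem says that a division ring with infinite center satisfying a \emph{nontrivial} generalized rational identity, i.e.\ one given by a nonzero element of $D(X)$, is centrally finite; equivalently, if $D$ has infinite center and is centrally infinite, then $D^*$ satisfies no nontrivial generalized rational identity. Starting from a non-central almost subnormal subgroup $N$ that satisfies a nontrivial generalized rational identity $f=0$ with $0\ne f\in D(X)$, and fixing an almost normal series $N=N_r\le N_{r-1}\le\cdots\le N_0=D^*$, it therefore suffices to manufacture, step by step, a nonzero element of $D(X)$ that vanishes on all of $D^*=N_0$; Amitsur's theorem then forces $D$ to be centrally finite. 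Since each $N_i$ contains the non-central group $N$, every term of the series is itself non-central, a fact I would use repeatedly.

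The induction passes from a nontrivial generalized rational identity of $N_{i+1}$ to one of $N_i$, and there are two kinds of steps. When $[N_i:N_{i+1}]<\infty$, I would pass to the core $C=\Core_{N_i}(N_{i+1})$, which is normal in $N_i$ of finite index $e=[N_i:C]$; since the quotient $N_i/C$ has order dividing $e$, every $c\in N_i$ satisfies $c^{e}\in C\subseteq N_{i+1}$. Consequently, if $f=0$ is a nontrivial generalized rational identity of $N_{i+1}$, the substitution $x_j\mapsto x_j^{e}$ produces $g(x_1,\ldots,x_m)=f(x_1^{e},\ldots,x_m^{e})$, which vanishes on all of $N_i$ and remains a nonzero element of $D(X)$, the substitution $x_j\mapsto x_j^{e}$ being injective. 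When instead $N_{i+1}$ is normal in $N_i$, I would invoke the conjugation technique underlying \emph{Chiba's theorem} on generalized rational identities of subnormal subgroups: conjugating a nontrivial identity of the non-central normal subgroup $N_{i+1}$ by the elements of $N_i$ and combining the resulting identities yields a nontrivial generalized rational identity satisfied by $N_i$.

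The main obstacle is the normal step: this is exactly Chiba's deep contribution, and it is where the non-centrality of $N_{i+1}$ (and hence of $N$) together with the centrally-infinite setting is genuinely needed, since a central normal subgroup would satisfy trivial-looking identities that do not propagate upward. The accompanying delicate point, present in both kinds of steps, is to certify that the transferred expression is never the zero element of $D(X)$---that the identities stay \emph{nontrivial} all the way up---so that the final appeal to Amitsur's theorem is not vacuous; this is where the bookkeeping on the domains $\mathrm{Dom}_D(\cdot)$ must be controlled. Once the induction reaches $N_0=D^*$, we have exhibited a nonzero $f\in D(X)$ vanishing on $D^*$, which by Amitsur's theorem is impossible unless $D$ is finite-dimensional over its center. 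This yields the conclusion of the lemma.
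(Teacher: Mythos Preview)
The paper does not supply its own proof of this lemma; it is quoted as \cite[Theorem~1.1]{Pa_HaBiDu_16} and used as a black box, so there is nothing in the present paper to compare your argument against. That said, your outline is the standard route to that external result and is sound in structure: induct up the almost normal series, push the identity one step at a time, and invoke Amitsur's theorem once you reach $D^*$. The power substitution $x_j\mapsto x_j^{e}$ correctly handles the finite-index steps (minor slip: $|N_i/C|$ equals $e$, it does not merely divide it, though the consequence $c^{e}\in C$ is unaffected), and you rightly flag the normal step---the Chiba-type propagation of a nontrivial rational identity from a non-central normal subgroup to the ambient group---as the genuine content, which is precisely what \cite{Pa_HaBiDu_16} supplies. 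A self-contained proof would need to carry out that construction explicitly and verify at each stage that the transferred element of $D(X)$ remains nonzero, but as a plan your proposal matches the intended argument behind the cited theorem.
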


In this section, we mainly work with a special class of generalized rational identities, namely, generalized group identities. An element $$w(x_1,x_2,\ldots,x_n)=a_1x_{i_1}^{m_1}a_2x_{i_2}^{m_2}\ldots a_tx_{i_t}^{m_t}a_{t+1}\in D(X)$$ is called a \textit{generalized group monomial} over $D^*$ if $a_1,a_2,\ldots,a_{t+1}\in D^*$. Additionally, assume that $w$ is non-identity. Let $H$ be a subgroup of $D^*$.
We say that $w=1$ is a \emph{generalized group identity} of $H$ or $H$ \emph{satisfies the generalized group identity} $w=1$ if $w(c_1,c_2,\ldots,c_n)=1$ for every $c_1,c_2,\ldots,c_n\in H$. For results on generalized group identities in division rings, we prefer \cite{Pa_GoMi_82} and \cite{Pa_To_85}. Results on generalized group identities of almost subnormal subgroups in division rings we use in this paper are from \cite{Pa_NgBiHa_2016}. In fact, we will use the following result.

\begin{proposition}{\rm \cite[Theorem 2.2]{Pa_NgBiHa_2016}}\label{pro:3}
Let $D$ be a division ring with infinite center $F$ and assume that $N$ is an almost subnormal subgroup of $D^*$. If $N$ satisfies a generalized group identity over $D^*$, then $N$ is central, that is, $N\subseteq F$.
\end{proposition}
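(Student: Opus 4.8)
The plan is to argue by contradiction: assuming $N$ is non-central, I will show that $D$ must be commutative, which gives $N\subseteq F$ and contradicts non-centrality. The first move is to feed the hypothesis into Lemma~\ref{hdb}. If $w(x_1,\dots,x_n)=a_1x_{i_1}^{m_1}\cdots a_{t+1}$ is a non-identity generalized group monomial with $w=1$ on $N$, then $w-1$ is a non-zero element of $D(X)$ that vanishes on every point of $N^n$ lying in its domain (all such points do, since the $x_{i_j}$ are invertible in $N$); thus $N$ satisfies the generalized rational identity $w-1=0$. As $N$ is non-central and almost subnormal, Lemma~\ref{hdb} yields that $D$ is centrally finite, say $[D:F]=d^2<\infty$. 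If $d=1$ then $D=F$ and we are already done, so from now on I assume $d\ge 2$ and aim for a contradiction.

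Next I would pass to a splitting field. Fix an algebraic closure $\overline{F}$ of the infinite field $F$; then $D\otimes_F\overline{F}\cong M_d(\overline{F})$, and $D^*$ embeds as a Zariski-dense subgroup of $\GL_d(\overline{F})$, because $D^*$ is the complement of the reduced-norm-zero hypersurface in $D\cong\mathbb{A}^{d^2}$ and the $F$-points of a non-empty open subset of affine space are dense when $F$ is infinite. The monomial $w$ defines a morphism of varieties $W\colon \GL_d^{\,n}\to\GL_d$, $W(c)=a_1c_{i_1}^{m_1}\cdots a_{t+1}$ (inversion being a morphism on $\GL_d$), so the locus $\{c:W(c)=1\}$ is Zariski closed. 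Since it contains $N^n$, it contains $(\overline{N})^n$, where $\overline{N}$ is the Zariski closure of $N$; hence $W\equiv 1$ on $(\overline{N})^n$.

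The heart of the argument is to show $\overline{N}\supseteq\mathrm{SL}_d(\overline{F})$. I would take Zariski closures along an almost normal series $N=N_r\le\cdots\le N_0=D^*$: each finite-index step preserves the identity component, while each normal step $N_{i+1}\vartriangleleft N_i$ produces a connected closed normal subgroup $\overline{N_{i+1}}^{\,\circ}\vartriangleleft\overline{N_i}$. Because the only connected closed normal subgroups of $\GL_d$ are $1$, the central torus $Z$, $\mathrm{SL}_d$, and $\GL_d$, tracing the chain down from $\overline{N_0}^{\,\circ}=\GL_d$ forces $\overline{N}^{\,\circ}$ either to contain $\mathrm{SL}_d$ or to be contained in $Z$. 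In the latter case $\overline{N}$ is central-by-finite, so $[N:N\cap F^*]<\infty$; then $N$ has finite exponent modulo $F^*$, hence is radical over $F$, and a Kaplansky--Herstein-type theorem for almost subnormal subgroups (this is exactly where infinitude of $F$ is essential) forces $N\subseteq F$, contradicting non-centrality. Therefore $\overline{N}^{\,\circ}\supseteq\mathrm{SL}_d$, and in particular $\overline{N}\supseteq\mathrm{SL}_d(\overline{F})$.

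Finally, combining this with the previous step gives $W\equiv 1$ on $\mathrm{SL}_d(\overline{F})^n$, so the non-identity monomial $W=1$ is a generalized group identity of $\mathrm{SL}_d(\overline{F})$ with $d\ge 2$; this is impossible by Borel's dominance theorem for word maps together with its extension to the coefficiented case (cf.\ \cite{Pa_GoMi_82,Pa_To_85}), which rules out non-trivial generalized group identities on $\mathrm{SL}_d(\overline{F})$ for $d\ge 2$. The contradiction forces $d=1$, whence $D=F$ and $N\subseteq F$. I expect the main obstacle to be the structural step $\overline{N}\supseteq\mathrm{SL}_d$, and specifically the exclusion of the central case: a naive ``center-by-finite is impossible'' claim is false for arbitrary subgroups (witness $Q_8\cdot\mathbb{Q}^*$ inside the rational quaternions), so almost-subnormality must be used essentially, through the centralizer and radical-over-center theory of (almost) subnormal subgroups of $\GL_1(D)$ over an infinite center.
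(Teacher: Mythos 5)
The paper does not actually prove this proposition: it imports it verbatim from \cite[Theorem 2.2]{Pa_NgBiHa_2016}, so your attempt can only be measured against that source's method, which it largely reproduces (reduce to a centrally finite $D$ via Lemma~\ref{hdb}, split $D\otimes_F\overline{F}\cong M_d(\overline{F})$, use Zariski-density of $D^*$ in $\GL_d(\overline{F})$ for infinite $F$, push the almost normal series through Zariski closures, and kill the resulting identity on $\mathrm{SL}_d(\overline{F})$ via Golubchik--Mikhalev/Tomanov). Most steps are sound, but there is one genuine gap, precisely at the point you yourself flagged: the case $\overline{N}^{\,\circ}\subseteq Z$. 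You reduce it to $[N:N\cap F^*]<\infty$ and then invoke ``a Kaplansky--Herstein-type theorem for almost subnormal subgroups'' to force $N\subseteq F$. No such theorem is stated in this paper, and it is not classical: the known results asserting that almost subnormal subgroups radical over the center are central are themselves proved by the method of (or by appeal to) the very proposition you are proving, so as written this step is circular, and $N$ radical of bounded exponent over $F$ satisfies the generalized group identity $[x^m,y^m]=1$, which you cannot feed back into the statement under proof. A second, smaller defect is in the chain-tracing itself: after the first step, $\overline{N_{i+1}}^{\,\circ}$ is normal in $\overline{N_i}$, not in $\GL_d$, so the classification of connected closed normal subgroups of $\GL_d$ does not apply directly at the lower links.

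Both defects disappear simultaneously if you track full closures rather than identity components and use the classical Dickson-type dichotomy: for $d\ge 2$ and $|k|\ge 4$, any subgroup of $\GL_d(k)$ normalized by $\mathrm{SL}_d(k)$ is either contained in the scalars or contains $\mathrm{SL}_d(k)$. Inducting along the series, $\overline{N_0}=\GL_d(\overline{F})\supseteq \mathrm{SL}_d(\overline{F})$; at a normal step $\overline{N_{i+1}}$ is normalized by $\overline{N_i}\supseteq \mathrm{SL}_d(\overline{F})$, hence is scalar or contains $\mathrm{SL}_d(\overline{F})$; at a finite-index step $\overline{N_{i+1}}\supseteq \overline{N_i}^{\,\circ}\supseteq \mathrm{SL}_d(\overline{F})$. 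If some $\overline{N_j}$ lands in the scalars, then $N_j\subseteq D^*\cap \overline{F}\cdot 1=F^*$ (an element of $D$ that becomes scalar in $D\otimes_F\overline{F}$ is central), so $N\subseteq N_j\subseteq F^*$ contradicts non-centrality outright --- the central-by-finite detour, and with it the missing radicality theorem, never arises. Two further caveats worth recording: using Lemma~\ref{hdb} is legitimate inside this paper (it is an independent external input), but one should verify that its proof in \cite{Pa_HaBiDu_16} does not rest on the present proposition; and the final step needs the standard Amitsur-type transfer guaranteeing that a monomial $w$ which is non-identity in $D(X)$ remains a non-degenerate generalized group monomial over $M_d(\overline{F})$ before \cite{Pa_GoMi_82,Pa_To_85} can be applied.
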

Assume that $H$ is a subgroup of finite index $n$ in a group $G$. If $H$ is normal in $G$, then $g^n\in H$ for any element $g\in G$. If $H$ is non-normal in $G$, then by Poincare's theorem, it is easy to see that 
$g^{n!}\in H$ for any element $g\in G$.

Let $a\in D^*$. Assume that $N$ is a non-central almost subnormal subgroup of $D^*$ with an almost normal series $$N=N_r\leq N_{r-1}\leq \ldots\leq N_1\leq N_0=D^*.$$
We construct a series of subgroups $H_n$ of $D^*$ depending on $N$ and $a$ as follows:
 
Put $H_0=D^*$. For any integer $n>0$, if $N_n$ is normal in $N_{n-1}$, then we put $H_n:=\langle bab^{-1}\mid b\in H_{n-1}\rangle$. Otherwise, $[N_{n-1}:N_n]=\ell_n<\infty$, we put $H_n:=\langle b^{\ell_n!}\mid b\in H_{n-1}\rangle$. Hence, we get the following new sequence of subgroups 
$$H=H_r\leq H_{r-1}\leq \ldots \leq H_1\leq H_0=D^*.$$ Moreover, we have the following lemma.

\begin{lemma} \label{lem:4} Let $N_1,N_2,\ldots,N_r, H_1,H_2,\ldots,H_r$ and $a$ be as above. Then, the following statements hold:
\begin{enumerate}[{\rm (1)}]
\item For every $0\le n<r$, $H_n\trianglelefteq H_{n-1}$ in case $N_n\trianglelefteq N_{n-1}$, and $b^{\ell_n!}\in H_n$ for every $b\in H_{n-1}$ in case $[N_{n}:N_{n-1}]=\ell_n<\infty$.
\item  If $a\in N$, then $H_n\leq N_n$ for any $0\le n\le r$.
\item For any $0\le n\le r$, if $c\in D^*$ such that $ac=ca$, then $cH_nc^{-1}\leq H_n.$
\end{enumerate}
\end{lemma}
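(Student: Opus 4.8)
The plan is to derive (1) directly from the defining construction of the $H_n$, and to prove (2) and (3) by induction on $n$. For (1), consider a step at which $N_n\trianglelefteq N_{n-1}$: here $H_n=\langle bab^{-1}\mid b\in H_{n-1}\rangle$ is precisely the normal closure of $a$ in $H_{n-1}$. Indeed, for any $g\in H_{n-1}$ and any generator $bab^{-1}$ with $b\in H_{n-1}$ one has $g(bab^{-1})g^{-1}=(gb)a(gb)^{-1}$ with $gb\in H_{n-1}$, so conjugation by $g$ permutes the generating set and hence $H_n\trianglelefteq H_{n-1}$. At a step where $[N_{n-1}:N_n]=\ell_n<\infty$, the assertion $b^{\ell_n!}\in H_n$ for $b\in H_{n-1}$ is immediate, since these elements are by definition the generators of $H_n$.

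For (2), I would induct on $n$, the base case $n=0$ being $H_0=D^*=N_0$. Assume $H_{n-1}\leq N_{n-1}$. If $N_n\trianglelefteq N_{n-1}$, then for a generator $bab^{-1}$ of $H_n$ we have $b\in H_{n-1}\leq N_{n-1}$ and $a\in N=N_r\leq N_n$; normality of $N_n$ in $N_{n-1}$ forces $bab^{-1}\in N_n$, so $H_n\leq N_n$. If instead $[N_{n-1}:N_n]=\ell_n<\infty$, then each generator $b^{\ell_n!}$ of $H_n$ lies in $N_n$ by the remark preceding the lemma (Poincaré's theorem: a subgroup of index $\ell$ contains the $\ell!$-th power of every element of the overgroup), applied to $b\in H_{n-1}\leq N_{n-1}$. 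In either case $H_n\leq N_n$, completing the induction.

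For (3), I would again induct on $n$, with base case $cD^*c^{-1}=D^*=H_0$. Assume $cH_{n-1}c^{-1}\leq H_{n-1}$. The decisive observation is that $ac=ca$ gives $cac^{-1}=a$. In the normal case, for a generator $bab^{-1}$ one computes
\[
c(bab^{-1})c^{-1}=(cbc^{-1})(cac^{-1})(cbc^{-1})^{-1}=(cbc^{-1})\,a\,(cbc^{-1})^{-1},
\]
and since $cbc^{-1}\in H_{n-1}$ by the induction hypothesis, this is again a generator of $H_n$. In the finite-index case, $cb^{\ell_n!}c^{-1}=(cbc^{-1})^{\ell_n!}$ is likewise a generator of $H_n$, because $cbc^{-1}\in H_{n-1}$. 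Thus conjugation by $c$ carries the generating set of $H_n$ into $H_n$, giving $cH_nc^{-1}\leq H_n$.

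These are all routine inductions, so I do not expect a genuine obstacle beyond careful bookkeeping. The two points that require attention are the correct use of the Poincaré-type consequence in the finite-index branch of (2)—ensuring the $\ell_n!$-th power lands in $N_n$ rather than merely in a core—and, in (3), recognizing that the commutativity $cac^{-1}=a$ is exactly what is needed to send generators of $H_n$ to generators; without it the conjugates $cbc^{-1}ac^{-1}b^{-1}c$ would not obviously simplify. Everything else reduces to the observation that in each branch $H_n$ is generated by a set that is stable under the relevant operations inherited from $H_{n-1}$.
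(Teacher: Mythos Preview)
Your proposal is correct and follows essentially the same approach as the paper: part (1) is declared immediate from the definition (you spell out the normal-closure observation a bit more explicitly, but it is the same idea), and parts (2) and (3) are proved by exactly the same inductions on $n$ with the same two-case split, using $a\in N\le N_n$ together with normality or Poincar\'e for (2), and the identity $cac^{-1}=a$ to send generators to generators for (3).
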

\begin{proof} (1) It follows immediately from the definition of $H_n$.
\bigskip

(2) We show this assertion by induction on $n$. It is trivial that $H_0\subseteq N_0$. Assume that $H_{n-1}\le N_{n-1}$. We will show $H_n\le N_n$. There are two cases.
\bigskip

\noindent{\it Case 1: $N_n\trianglelefteq N_{n-1}$}. 
\bigskip

Since $a\in N\le N_n$, one has $bab^{-1}\in N_n$ for every $b\in N_{n-1}$. In particular, $bab^{-1}\in N_n$ for every $b\in H_{n-1}$. Hence, $H_n:=\langle bab^{-1}\mid b\in H_{n-1}\rangle\le N_{n}$.
\bigskip

\noindent{\it Case 2. $[N_{n-1}:N_n]=\ell_n<\infty$}. 
\bigskip

Then, $b^{\ell_n!}\in N_n$ for every $b\in N_{n-1}$. In particular, $b^{\ell_n!}\in N_n$ for every $b\in H_{n-1}$. Therefore, $$H_n:=\langle b^{\ell_n!}\mid b\in H_{n-1}\rangle\le N_n.$$

In both cases, we have $H_n\le N_n$, so (2) holds. 
\bigskip

(3) We prove this statement by induction on $n$. It is trivial that $cH_0c^{-1}=H_0$. Assume that $0<n\leq r$ and $cH_{n-1}c^{-1}\leq H_{n-1}$. We must show $cH_nc^{-1}\leq H_n$. If $N_n$ is normal in $N_{n-1}$, then $H_n=\langle bab^{-1}\mid b\in H_{n-1}\rangle$. Hence,
$$cH_nc^{-1}=\langle c(bab^{-1})c^{-1}\mid b\in H_{n-1}\rangle =\langle (cbc^{-1})a(cbc^{-1})^{-1}\mid b\in H_{n-1} \rangle\leq H_n.$$
In the case when $[N_{n-1}:N_n]=\ell_n<\infty$, then $b^{\ell_n!}\in N_n$ for any $b\in N_{n-1}$. By definition, $H_n=\langle b^{\ell_n!}\mid b\in H_{n-1}\rangle$, so $$cH_nc^{-1}=\langle cb^{\ell_n!}c^{-1}\mid b\in H_{n-1}\rangle=\langle (cbc^{-1})^{\ell_n!}\mid b\in H_{n-1}\rangle\leq H_n.$$ 

The proof is now complete.
\end{proof}

Now, keeping the assumption and the notation for $N$ as above, we build inductively a sequence of non-identity elements $w_n$ in $D(x,y)$ depending on $N$ as follows.
Put $w_0(x,y)=x$. For any integer $n\geq 1,$ put
$$w_n(x,y):=\begin{cases}[w_{n-1}(x,y),y],&\text{if }N_{n} \text{ is normal in }N_{n-1}\text{ or } n>r\\
(w_{n-1}(x,y))^{\ell_n!},&\text{if } [N_{n-1}:N_n]=\ell_n<\infty.
\end{cases}$$

We claim that if $a\in D^*$ and $b\in N$, then $w_n(a,b)\in N_n$ for any $n\le r$ and $w_n(a,b)\in N_r=N$ for any $n> r$. We will first show the claim by induction on $0\le n\le r$. It is clear that $w_0(a,b)=a\in N_0=D^*$. Assume that $w_{n-1}(a,b)\in N_{n-1}$. If $N_n$ is normal in $N_{n-1}$, then $$w_n(a,b)=w_{n-1}(a,b)b(w_{n-1}(a,b))^{-1}b^{-1}=(w_{n-1}(a,b)b(w_{n-1}(a,b))^{-1})b^{-1}\in N_n.$$
If $[N_{n-1}:N_n]=\ell_n<\infty$, then $c^{\ell_n!}\in N_n$ for every $c\in N_{n-1}$, so $w_n(a,b)=(w_{n-1}(a,b))^{\ell_n!}\in N_n$. Thus, we showed that $w_n(a,b)\in N_n$ for any $n\le r$. As a corollary, $w_n(a,b)\in N_r=N$ for any $n> r$.

Now, assume that $K$ is a division subring of $D$ such that $K$ is $N$-invariant and $K\not\subseteq F$. Suppose that $h\in (K\cap N)\backslash F$ and $g\in N\backslash \{-1\}$, and consider the elements $$u_n(h,g):=w_n((1+g)h(1+g)^{-1}, g),$$ 
$$v_n(h,g):=w_n((1+g)^{-1}h(1+g), g).$$ 
 
\begin{lemma}\label{lem:5} For any $n \ge 0$,
$u_n(h,g)=(1+g)\phi_n(h,g)(1+g)^{-1}$ and $v_n(h,g)=(1+g)^{-1}\phi_n(h,g)(1+g),$ where $\phi_n(h,g)$ is an element of the subgroup $\langle h,g\rangle$ of $K^*\cap N$.
\end{lemma}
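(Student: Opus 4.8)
The plan is to prove both formulas simultaneously by induction on $n$, producing the \emph{same} element $\phi_n(h,g)$ for $u_n$ and $v_n$ at each stage. The single observation that makes everything work is that $g$ commutes with $1+g$, since $g(1+g)=g+g^2=(1+g)g$; consequently $g$ also commutes with $(1+g)^{-1}$, so that $(1+g)^{-1}g(1+g)=g$ and $(1+g)g^{-1}=g^{-1}(1+g)$. For the base case $n=0$ we have $w_0(x,y)=x$, hence $u_0(h,g)=(1+g)h(1+g)^{-1}$ and $v_0(h,g)=(1+g)^{-1}h(1+g)$; taking $\phi_0(h,g)=h$ settles $n=0$, and $h\in K^*\cap N$ by the choice of $h$.

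For the inductive step assume $u_{n-1}=(1+g)\phi_{n-1}(1+g)^{-1}$ and $v_{n-1}=(1+g)^{-1}\phi_{n-1}(1+g)$ with $\phi_{n-1}\in\langle h,g\rangle$. There are two cases according to the definition of $w_n$. If $w_n=[w_{n-1},y]$, then $u_n=[u_{n-1},g]$; substituting the inductive form of $u_{n-1}$ and repeatedly using that $g$ commutes with $(1+g)^{\pm1}$, the inner factors $(1+g)^{-1}$ and $(1+g)$ cancel against $g$ while the trailing $g^{-1}$ slides past $(1+g)^{-1}$, leaving $u_n=(1+g)[\phi_{n-1},g](1+g)^{-1}$. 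The same computation applied to $v_n=[v_{n-1},g]$ yields $v_n=(1+g)^{-1}[\phi_{n-1},g](1+g)$, so here we set $\phi_n=[\phi_{n-1},g]$. If instead $w_n=(w_{n-1})^{\ell_n!}$, then $u_n=(u_{n-1})^{\ell_n!}=(1+g)\phi_{n-1}^{\ell_n!}(1+g)^{-1}$ and likewise $v_n=(1+g)^{-1}\phi_{n-1}^{\ell_n!}(1+g)$, so we set $\phi_n=\phi_{n-1}^{\ell_n!}$. In both cases the same $\phi_n$ serves for $u_n$ and $v_n$, and $\phi_n$ is built from $\phi_{n-1}$ and $g$ by group operations, whence $\phi_n\in\langle h,g\rangle$.

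It remains to verify the membership $\phi_n\in K^*\cap N$. Since $h,g\in N$, the entire subgroup $\langle h,g\rangle$ lies in $N$, so $\phi_n\in N$ is automatic. Membership in $K^*$ is the delicate point, and it is here that the $N$-invariance of $K$ is used: in the power case $\phi_n=\phi_{n-1}^{\ell_n!}\in K^*$ follows at once from $\phi_{n-1}\in K^*$, while in the commutator case $\phi_n=\phi_{n-1}\,(g\phi_{n-1}^{-1}g^{-1})$, and although $g$ need not lie in $K$, the hypothesis that $gKg^{-1}=K$ for $g\in N$ forces $g\phi_{n-1}^{-1}g^{-1}\in K$, so $\phi_n\in K^*$. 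The main obstacle is precisely this commutator case: one must at the same time carry out the cancellations via $[g,1+g]=1$ to recover the conjugation form, and invoke $N$-invariance to keep $\phi_n$ inside $K^*$ even though the conjugating element $g$ itself typically lies outside $K$. Once both points are secured the induction closes and the lemma follows.
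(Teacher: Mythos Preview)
Your proof is correct and follows essentially the same inductive argument as the paper: base case $\phi_0=h$, then split into the commutator and power cases, using throughout that $g$ commutes with $1+g$. You in fact go slightly beyond the paper's written proof by explicitly checking $\phi_n\in K^*\cap N$ via the $N$-invariance of $K$ (the paper's proof only records $\phi_n\in\langle h,g\rangle$), and your formula $\phi_n=[\phi_{n-1},g]$ in the commutator case is the correct one---the paper's displayed $\phi_n=\phi_{n-1}\,g\,\phi_{n-1}^{-1}$ drops the trailing $g^{-1}$.
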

\begin{proof} We prove the lemma by induction on $n$. We have $$u_0(h,g)=w_0((1+g)h(1+g)^{-1}, g)=(1+g)h(1+g)^{-1}$$ and $$v_0(h,g)=w_0((1+g)^{-1}h(1+g), g)=(1+g)^{-1}h(1+g).$$ Hence, $\phi_0(h,g)=h\in \langle h,g\rangle$. 

Assume that $u_{n-1}=w_{n-1}((1+g)h(1+g)^{-1},g)=(1+g)\phi_{n-1}(h,g)(1+g)^{-1}$ and $v_{n-1}=w_{n-1}((1+g)^{-1}h(1+g),g)=(1+g)^{-1}\phi_{n-1}(h,g)(1+g)$ where $\phi_{n-1}(h,g)$ is an element of the subgroup $\langle h,g\rangle$. Now consider $u_{n}=w_{n}((1+g)h(1+g)^{-1},g)$ and $v_{n}=w_{n}((1+g)^{-1}h(1+g),g)$ with two cases:
\bigskip

\noindent{\it Case 1: $N_n$ is normal in $N_{n-1}$.}
\bigskip

Then, 
$$\begin{aligned}
u_n(h,g)&=w_{n}((1+g)h(1+g)^{-1},g)\\
&=w_{n-1}((1+g)h(1+g)^{-1},g)g (w_{n-1}((1+g)h(1+g)^{-1},g))^{-1}g^{-1}\\
&= (1+g)\phi_{n-1}(h,g)(1+g)^{-1} g (1+g)(\phi_{n-1}(h,g))^{-1}(1+g)^{-1}\\
&=(1+g)\phi_{n-1}(h,g)g(\phi_{n-1}(h,g))^{-1}(1+g)^{-1}.
\end{aligned}$$
It is similar that $$v_n(h,g)=(1+g)^{-1}\phi_{n-1}(h,g)g(\phi_{n-1}(h,g))^{-1}(1+g).$$
It implies that $\phi_n(h,g)=\phi_{n-1}(h,g)g(\phi_{n-1}(h,g))^{-1}\in \langle h,g\rangle$.
\bigskip

\noindent{\it Case 2: $[N_{n-1}: N_n]=\ell_n<\infty$.}
\bigskip

Then, 
$$u_n(h,g)=w_{n}((1+g)h(1+g)^{-1},g)^{\ell_n!}=(1+g)(\phi_{n-1}(h,g))^{\ell_n!}(1+g)^{-1}.$$
Similarly, $v_n(h,g)=(1+g)^{-1}(\phi_{n-1}(h,g))^{\ell_n!}(1+g)$. Hence, 
$$\phi_n(h,g)=(\phi_{n-1}(h,g))^{\ell_n!}\in \langle h,g\rangle.$$

The proof is now complete.
\end{proof}

\begin{lemma}\label{lem:6}
Let $D$ be a division ring with infinite center $F$, and assume that $N$ is a non-central almost subnormal subgroup of $D^*$. If $K$ is a non-central $N$-invariant division subring of $D$, then $K\cap N$ is non-abelian. In particular, $K\cap N\not \subseteq F$.
\end{lemma}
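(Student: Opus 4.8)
The plan is to argue by contradiction: assume that $K\cap N$ is abelian and produce a contradiction with the non-centrality of $N$. Since $F$ is commutative, an abelian $K\cap N$ contained in $F$ would be consistent with ``abelian'', so the clause ``in particular $K\cap N\not\subseteq F$'' is just the trivial consequence of non-abelianity; the real content is the contradiction. The engine will be Proposition~\ref{pro:3}: I will manufacture a \emph{non-identity} generalized group identity satisfied by the non-central almost subnormal subgroup $N$, forcing $N\subseteq F$, which is absurd. A point worth flagging at the outset is that I will build the identity on all of $N$ inside $D$, rather than merely record that $K\cap N$ satisfies $[x,y]=1$ and try to apply Proposition~\ref{pro:3} inside $K^*$; the latter is not available, because the center of $K$ need not be infinite, whereas $F=Z(D)$ is.

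The core of the argument is short once a \emph{non-central} element $h\in(K\cap N)\setminus F$ is available. For every $m\in N$ the $N$-invariance of $K$ gives $mhm^{-1}\in K$, while $m,h\in N$ gives $mhm^{-1}\in N$; hence $mhm^{-1}\in K\cap N$. By the abelian assumption $mhm^{-1}$ commutes with $h$, so $[mhm^{-1},h]=1$ for every $m\in N$. Reading the left-hand side as a word in the single variable $x=m$, this says that $N$ satisfies $w(x)=1$ where $w(x)=xhx^{-1}hxh^{-1}x^{-1}h^{-1}$, a generalized group monomial with coefficients $1,h,h^{-1}\in D^*$. Because $h\notin F$, this monomial is a non-trivial (reduced) element of $D(x)$, so Proposition~\ref{pro:3} applies and yields $N\subseteq F$, the desired contradiction. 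Thus the proof reduces entirely to securing a non-central element of $K\cap N$.

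Securing such an $h$, equivalently ruling out $K\cap N\subseteq F$, is the delicate step. The naive attempt fails: taking $h\in K\setminus F$ and conjugating by $m\in N$ produces $mhm^{-1}\in K$ but places nothing new in $N$ unless one already knows $h\in N$. This circularity is exactly what the elements $u_r,v_r$ of Lemma~\ref{lem:5} are designed to break. Starting from any $h_0\in K\setminus F$ and any $g\in N\setminus\{-1\}$, the ``$1+g$''-twist feeds $(1+g)h_0(1+g)^{-1}\in D^*$ into the word $w_r$; by the membership property established for $w_r$ the elements $u_r(h_0,g)=(1+g)\phi_r(1+g)^{-1}$ and $v_r(h_0,g)=(1+g)^{-1}\phi_r(1+g)$ lie in $N$, with $\phi_r\in\langle h_0,g\rangle$ (the purely multiplicative computation of Lemma~\ref{lem:5} needs only $h_0\in D^*$ and $g\ne -1$). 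Confronting these $N$-members with the $N$-invariance of $K$ is meant to manufacture genuine elements of $K\cap N$ lying outside $F$; in the contrary case the resulting relations should collapse to a non-identity generalized \emph{rational} identity satisfied by $N$, after which Lemma~\ref{hdb} forces $D$ to be centrally finite and the remaining, now rigid, situation can be analyzed directly.

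I expect the main obstacle to be precisely this last reduction, and its difficulty is twofold. First is \emph{non-triviality}: one must check that the manufactured objects really are identities carrying information—the group monomial $w$ degenerates to $1$ exactly when $h\in F$, and any rational identity squeezed out of $u_r,v_r$ must likewise be verified not to be identically satisfied—so the hypothesis $h\notin F$ and the freeness of $x$ over $D$ are doing essential work. Second is breaking the circularity described above: since conjugation cannot move a non-central element of $K$ into $N$, the $(1+g)$-twist, which lands in $N$ by the construction of $w_r$ and not by invariance, is indispensable. Once $K\cap N\not\subseteq F$ is in hand, the clean group-identity argument of the second paragraph completes the proof of non-abelianity.
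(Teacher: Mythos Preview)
Your ``core argument'' in the second paragraph is fine: once $h\in(K\cap N)\setminus F$ is available, the identity $[xhx^{-1},h]=1$ on $N$ does the job via Proposition~\ref{pro:3}. The genuine gap is the ``delicate step'' you flag yourself: producing such an $h$, or equivalently ruling out $K\cap N\subseteq F$.

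Your proposed mechanism for this step does not work as written. You suggest feeding an arbitrary $h_0\in K\setminus F$ into the $(1+g)$--twist of Lemma~\ref{lem:5}. It is true that the formal computation yields $u_r=(1+g)\phi_r(1+g)^{-1}\in N$ with $\phi_r\in\langle h_0,g\rangle$; but $\langle h_0,g\rangle$ is not contained in $K$ unless $g\in K$, so you have produced elements of $N$ with no visible relation to $K$. Choosing instead $g\in N\cap K$ forces (under the contrary hypothesis $K\cap N\subseteq F$) $g\in F$, and then $w_1((1+g)h_0(1+g)^{-1},g)=[h_0,g]=1$ and everything collapses. Your fallback (``the resulting relations should collapse to a non-identity generalized rational identity, after which Lemma~\ref{hdb} \ldots'') is not a proof; no concrete identity is exhibited, and invoking Lemma~\ref{hdb} here is both heavy and unnecessary.

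The paper sidesteps the bootstrapping problem entirely, with an observation you nearly had but did not use. Take $a\in K\setminus F$ (no requirement that $a\in N$) and let $b\in N$ be arbitrary. Then $w_r(a,b)\in N$ by the property recorded just before Lemma~\ref{lem:5} (only $b\in N$ is needed), and $w_r(a,b)\in K$ by a trivial induction: $w_0=a\in K$, a commutator step $[w_{n-1},b]$ stays in $K$ because $bKb^{-1}=K$, and a power step obviously stays in $K$. Hence $w_r(a,b)\in K\cap N$ for every $b\in N$. Under the assumption that $K\cap N$ is abelian, the two-variable word
\[
[w_r(a,x),\,w_r(c,y)]
\]
with fixed $a,c\in K\setminus F$ vanishes identically on $N\times N$; it is a non-identity generalized group monomial because $a,c\notin F$, and Proposition~\ref{pro:3} yields the contradiction. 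No separate step ``$K\cap N\not\subseteq F$'' is needed, and neither Lemma~\ref{lem:5} nor Lemma~\ref{hdb} enters.

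So the missing idea is simply that $w_r(\,\cdot\,,b)$ already carries $K$ into $K\cap N$; the $(1+g)$--twist is designed for a different purpose (forcing interaction with $(1+g)K(1+g)^{-1}$ in the proof of Theorem~\ref{th:1}) and is a red herring here.
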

\begin{proof} 
We will show that $K^*\cap N$ is not abelian. Assume that $K^*\cap N$ is abelian, and take any $a\in K\backslash F$. For any $b\in N$, we claim that $w_{r}(a,b)\in K^*\cap N$. As we have noted above, $w_{r}(a,b)\in N_{r}=N$. Now, we prove by induction that $w_n(a,b)\in K^*$ on $n\geq 0$. In the beginning, we have $w_0(a,b)=a\in K^*$. Suppose that $w_{n-1}(a,b)\in K^*$, then consider $w_{n}(a,b)$. If $N_{n}$ is normal in $N_{n-1}$, then $w_{n}(a,b)=[w_{n-1}(a,b),b]=w_{n-1}(a,b)b(w_{n-1}(a,b))^{-1}b^{-1}$. It implies that $w_{n+1}(a,b)\in K^*$ because $b(w_{n-1}(a,b))^{-1}b^{-1}\in K^*$ in view of the assumption that $K$ is $N$-invariant. If $[N_{n-1}:N_n]=\ell_n<\infty$, then $w_{n}(a,b)=(w_{n-1}(a,b))^{\ell_n!}\in K^*$. By the inductive assumption, $w_n(a,b)\in K^*$ for any $n\geq 0$. In particular, $w_r(a,b)\in K^*\cap N$, and the claim is proved. Thus, $w_r(a,b)w_r(c,d)(w_r(a,b))^{-1}(w_r(c,d))^{-1}=1$ for any $a,c\in K\backslash F$ and $b,d\in N$ because $K^*\cap N$ is abelian. Since $a,c\not\in F$, $w_r(a,x)w_r(c,y)(w_r(a,x))^{-1}(w_r(c,y))^{-1}$ is non-identity in $D(x,y)$. Hence, $N$ satisfies the generalized group identity $$w_r(a,x)w_r(c,y)(w_r(a,x))^{-1}(w_r(c,y))^{-1}=1.$$ By Proposition~\ref{pro:3}, $N$ is central, a contradiction.
\end{proof}

The following lemma is a special case of Theorem \ref{th:1} when $K$ is assumed to be a subfield of $D$. 

\begin{lemma}\label{lem:7}
Let $D$ be a division ring with infinite center $F$, and assume that $N$ is a non-central almost subnormal subgroup of $D^*$. If $K$ is an $N$-invariant subfield of $D$, then $K\subseteq F$. 
\end{lemma}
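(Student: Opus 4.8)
The plan is to argue by contradiction, leveraging Lemma~\ref{lem:6}, which was established immediately beforehand. Suppose, contrary to the desired conclusion, that $K\not\subseteq F$. Since $K$ is a subfield of $D$, it is in particular a division subring of $D$, and by hypothesis it is $N$-invariant; under the contradiction hypothesis it is moreover non-central. Thus $K$ meets every requirement that Lemma~\ref{lem:6} places on a non-central $N$-invariant division subring, so I may apply that lemma directly with this $K$.

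The key step is then the clash between commutativity and the conclusion of Lemma~\ref{lem:6}. On the one hand, Lemma~\ref{lem:6} asserts that $K\cap N$ is non-abelian. On the other hand, $K$ is a \emph{field}, hence commutative, and because $N\subseteq D^*$ we have $K\cap N=K^*\cap N$, a subgroup of the abelian group $K^*$; therefore $K\cap N$ is abelian. These two statements are incompatible, and the resulting contradiction forces $K\subseteq F$, as claimed.

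It is worth noting that one need not invoke Lemma~\ref{lem:6} as a black box: its proof uses the assumption that $K^*\cap N$ is abelian only to produce, via the monomials $w_r$ together with Lemma~\ref{lem:5}, a generalized group identity satisfied by $N$, after which Proposition~\ref{pro:3} forces $N$ to be central. In the present situation the abelianness of $K^*\cap N$ holds automatically since $K$ is commutative, so the same identity argument applies verbatim and again contradicts the non-centrality of $N$. As for difficulty, there is essentially no obstacle to surmount here: the substantive work already resides in Proposition~\ref{pro:3} and Lemma~\ref{lem:6}, and the only thing to check is the harmless bookkeeping that a non-central $N$-invariant subfield is a non-central $N$-invariant division subring, so that Lemma~\ref{lem:6} applies. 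The lemma is genuinely a specialization, isolating precisely the case in which the commutativity of $K$ makes the abelianness hypothesis of the identity argument free of charge.
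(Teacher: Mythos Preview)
Your argument is correct: a non-central $N$-invariant subfield is in particular a non-central $N$-invariant division subring, so Lemma~\ref{lem:6} applies and forces $K\cap N$ to be non-abelian, contradicting the commutativity of $K$.

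The paper, however, does not route the proof through Lemma~\ref{lem:6} or the $w_r$-machinery at all. It gives a shorter direct argument: pick $a\in K\setminus F$; for every $b\in N$ the conjugate $bab^{-1}$ lies in the commutative ring $K$, hence commutes with $a$, so $(bab^{-1})a(bab^{-1})^{-1}a^{-1}=1$. Since $a\notin F$, the one-variable monomial $xax^{-1}axa^{-1}x^{-1}a^{-1}$ is non-identity in $D(x)$, and therefore $N$ satisfies a generalized group identity; Proposition~\ref{pro:3} then forces $N\subseteq F$, a contradiction. Compared with your approach, this avoids the two-variable identity built from $w_r$ in Lemma~\ref{lem:6} and keeps Lemma~\ref{lem:7} logically independent of Lemma~\ref{lem:6}. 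Your route is perfectly valid and arguably more conceptual (it exhibits Lemma~\ref{lem:7} as the commutative specialization of Lemma~\ref{lem:6}), while the paper's route is more self-contained and yields a simpler identity.
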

\begin{proof} Assume that $K$ is a subfield of $D$ which is non-central and $N$-invariant. Let $a\in K\backslash F$. Then, for every $b\in N,$ one has $bab^{-1}\in K$, so $bab^{-1}a ba^{-1}b^{-1}a^{-1}=(bab^{-1})a (bab^{-1})^{-1}a^{-1}=1$. Moreover, $xax^{-1}a xa^{-1}x^{-1}a^{-1}$ is non-identity in $D(x)$ as $a\not\in F$. Hence, $N$ satisfies the generalized group identity $$xax^{-1}a xa^{-1}x^{-1}a^{-1}=1.$$
Now, by Proposition~\ref{pro:3}, $N$ is central and this contradicts the hypothesis. The proof is now complete.
 \end{proof}

The following lemma is essential in the proof of Theorem~\ref{th:1}.
\begin{lemma}\label{lem:9} Let $D$ be a division ring with infinite center $F$, and assume that $N$ is a non-central almost subnormal subgroup of $D^*$. If $K$ is a non-central $N$-invariant division subring of $D$, then $C_D(a)\subseteq K$ for any $a\in (K\cap N)\backslash F$.
\end{lemma}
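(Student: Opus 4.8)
The plan is to argue by contradiction. Suppose there were $c\in C_D(a)$ with $c\notin K$; I would manufacture from this a generalized group identity satisfied by $N$, so that Proposition~\ref{pro:3} forces $N\subseteq F$, contradicting non-centrality, and hence conclude $c\in K$. The engine is the Cartan--Brauer--Hua linear trick. Its mechanism is: if an element $d$ and the element $1+d$ both normalize $K$, then for $a'\in K$ both $k_1:=da'd^{-1}$ and $k_2:=(1+d)a'(1+d)^{-1}$ lie in $K$, and comparing $da'=k_1d$ with $(d+1)a'=k_2(d+1)$ gives $(k_1-k_2)d=k_2-a'$; thus whenever $k_1\ne k_2$, i.e.\ $d\notin C_D(a')$, one gets $d\in K$. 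I would like to run this with $d=c$ and some $a'\in K$ satisfying $[c,a']\ne 1$. Such an $a'$ exists unless $c$ centralizes all of $K$; that degenerate case I would treat separately, observing that $C_D(K)$ is itself $N$-invariant (since $bKb^{-1}=K$ forces $bC_D(K)b^{-1}=C_D(K)$ for $b\in N$) and disposing of it with Lemma~\ref{lem:6} together with the fact that $K\cap N$ is non-abelian.

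The main obstacle is exactly the hypothesis of the linear trick: by $N$-invariance every element of $N$ normalizes $K$, but for $g\in N$ the element $1+g$ need not lie in $N$, so it need not normalize $K$, and $d=c$ is not even known to lie in $N$. This is precisely what the elements $u_r(h,g),v_r(h,g)$ of Lemma~\ref{lem:5} are designed to repair: although $1+g\notin N$, the combinations $u_r=(1+g)\phi_r(1+g)^{-1}$ and $v_r=(1+g)^{-1}\phi_r(1+g)$ do lie in $N$ and hence normalize $K$, so conjugation by $1+g$ becomes available in a disguised, $N$-admissible form. Concretely, conjugation by $u_r$ factors as conjugation by $1+g$, followed by conjugation by $\phi_r$, followed by conjugation by $(1+g)^{-1}$; since $\phi_r\in N$ already normalizes $K$, this lets me isolate the genuine $(1+g)$-conjugation acting on suitable elements of $K$, supplying the missing normalization ingredient for the linear trick.

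With that ingredient in hand, I would assemble the output of the linear trick into a word in $D(x,y)$ whose coefficients are built from $a$, $c$, and fixed auxiliary elements $h\in K\cap N$: the commutation $ca=ac$ is used to collapse the obstruction term governing $C_D(a')$, so that the word evaluates to $1$ on all of $N$, while the hypothesis $c\notin K$ guarantees the word is a genuine non-identity in $D(x,y)$, the non-triviality being certified by $a\notin F$ exactly as in the proofs of Lemmas~\ref{lem:6} and~\ref{lem:7}. Proposition~\ref{pro:3} then yields $N\subseteq F$, the contradiction sought. I expect the hard part to be the last two steps in tandem: packaging the disguised $(1+g)$-conjugation so that the Cartan--Brauer--Hua cancellation still closes up inside $N$, and then arranging the auxiliary elements $h$ and the exponents coming from the almost normal series so that the resulting word is simultaneously an identity on $N$ and non-trivial in $D(x,y)$ precisely when $c\notin K$; this balancing is where the argument is most delicate.
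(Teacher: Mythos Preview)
Your plan has a genuine gap at exactly the point you flag as delicate. The Cartan--Brauer--Hua linear trick needs both $d$ and $1+d$ to normalize $K$ for the \emph{specific} $d$ in question; you want $d=c\in C_D(a)\setminus K$, but $c$ need not lie in $N$. The machinery of Lemma~\ref{lem:5} packages conjugation by $1+g$ for $g\in N$ into elements $u_r,v_r\in N$; it says nothing about conjugation by $c$ or by $1+c$. You never explain how the disguised $(1+g)$-conjugation, with $g$ ranging over $N$, interacts with the fixed outside element $c$ so as to force $c\in K$, nor how the commutation $ca=ac$ is actually leveraged. Without that link there is no word on $N$ to feed into Proposition~\ref{pro:3}, and your sketch stops short of a proof.

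The paper uses a different tool entirely: the subgroups $H_n$ of Lemma~\ref{lem:4}, not the $u_r,v_r$ of Lemma~\ref{lem:5}. The key is Lemma~\ref{lem:4}(3): every $b$ commuting with $a$ satisfies $bH_rb^{-1}\subseteq H_r$. Hence for $b\in C_D(a)$ and $g\in H_r$ one has $bgb^{-1}\in H_r\subseteq N$, so $b(gag^{-1})b^{-1}=(bgb^{-1})a(bgb^{-1})^{-1}\in K$; the same holds with $b+1$ in place of $b$, since $b+1\in C_D(a)$ as well. The linear trick now runs with $b,\,b+1$ acting on $gag^{-1}\in K$ (not on $a$), and yields that if $b\notin K$ then $b\in C_D(gag^{-1})$. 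From this the paper deduces $C_D(a)=C_D(gag^{-1})=gC_D(a)g^{-1}$ for all $g\in H_r$, so the subfield $Z(C_D(a))$ is $H_r$-invariant; Lemma~\ref{lem:7} (with $H_r$ playing the role of the almost subnormal subgroup) then gives $Z(C_D(a))\subseteq F$, whence $a\in F$, the contradiction. No generalized group identity on $N$ is built here, and Proposition~\ref{pro:3} is not invoked directly. The idea you are missing is precisely that $C_D(a)$ normalizes $H_r$; this is what lets the linear trick run for arbitrary $b\in C_D(a)$ even though $b\notin N$.
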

\begin{proof}
Since $K\not\subseteq F$ by Lemma~\ref{lem:6}, we have $K\cap N\not\subseteq F$. Assume that there exists $a\in (K\cap N)\backslash F$ such that $C_D(a)\not\subseteq K$. Let $$N=N_r\leq N_{r-1}\leq \ldots\leq N_1\leq N_0=D^*$$ be an almost normal series of $N$ in $D^*$.

Consider the subgroups $H_n$ constructed as in Lemma~\ref{lem:4} for $0\le n\le r$, that is, $H_0=D^*$. For any integer $n>0$, if $N_n$ is normal in $N_{n-1}$, then we put $H_n:=\langle bab^{-1}\mid b\in H_{n-1}\rangle$. Otherwise, that is, $[N_{n-1}:N_n]=\ell_n<\infty$, we put $H_n:=\langle b^{\ell_n!}\mid b\in H_{n-1}\rangle$.
 Let $g\in H_r$ be arbitrary. Since $H_r\leq N_r=N$, $gag^{-1}\in K^*$. We first claim that $$C_D(a)=C_D(gag^{-1})=gC_D(a)g^{-1}.$$ For $b\in C_D(a)$, put $$h:=b(gag^{-1})b^{-1}=(bgb^{-1})a(bgb^{-1})^{-1}.$$ By Lemma~\ref{lem:4}, $bgb^{-1}\in bH_rb^{-1}\leq H_r\leq N_r\leq N$. Since $K$ is $N$-invariant, $h\in K$ and $b(gag^{-1})=hb$. Similarly, since $b+1\in C_D(a)$, $(b+1)(gag^{-1})=h'(b+1)$ for some $h'\in K$. Hence, $$gag^{-1}=(b+1)(gag^{-1})-b(gag^{-1})=(h'-h)b+h'.$$ Recall that $gag^{-1}\in K$, so, if $h'\ne h$ then $$b=(h'-h)^{-1}(gag^{-1}-h')\in K.$$ Therefore, if $b\in C_D(a)\backslash K$, then $h=h'$, equivalently, $$b(gag^{-1})=hb=h'b=(gag^{-1})b.$$ 
Hence, $b\in C_D(gag^{-1})$ for $b\in C_D(a)\backslash K$. As a result, $C_D(a)\backslash K\subseteq C_D(gag^{-1})$. Now, for any $b'\in K\cap C_D(a)$, one has $b+b'\in C_D(a)\backslash K\subseteq C_D(gag^{-1})$, so $(b+b')(gag^{-1})=(gag^{-1})(b+b')$, equivalently, $b'(gag^{-1})=(gag^{-1})b'$. Thus, $$C_D(a)\cap K\subseteq C_D(gag^{-1}).$$ As a corollary, $C_D(a)\subseteq C_D(gag^{-1})$. Replacing $a$ by $gag^{-1}\in K\cap N$ and $g$ by $g^{-1}$, we have $C_D(gag^{-1})\subseteq C_D(g^{-1}(gag^{-1})g)=C_D(a)$. Hence, $C_D(a)=C_D(gag^{-1})$. 
Now, we prove the equality $C_D(a)=gC_D(a)g^{-1}$. For any $b\in C_D(a)=C_D(gag^{-1})$, one has $$(gbg^{-1})a=gb(g^{-1}ag)g^{-1}=g(g^{-1}ag)bg^{-1}=a(gbg^{-1}),$$ so $gbg^{-1}\in C_D(a)$. 
Therefore, the inclusion $gC_D(a)g^{-1}\subseteq C_D(a)$ holds for an arbitrary element $g\in H_r$. So, it follows $g^{-1}C_D(a)g \subseteq C_D(a)$, and consequently we have $C_D(a)=gC_D(a)g^{-1}$.
The claim is now proved.

Now, we claim that the center $Z(C_D(a))$ of $C_D(a)$ is $H_r$-invariant. For any $g\in H_r, c\in Z(C_D(a))$ and $d\in C_D(a)$, we have $(gcg^{-1})d=gc(g^{-1}dg)g^{-1}=g(g^{-1}dg)cg^{-1}=d(gcg^{-1})$. By applying Lemma~\ref{lem:7} for the subgroup $H_r$ and subfield $Z(C_D(a))$, we have $Z(C_D(a))\subseteq F$. In particular, $a\in F$, a contradiction. Thus, the proof is now complete. 
\end{proof}

\begin{lemma}\label{lem:8} {\rm \cite[14.3.6, p. 435]{scott}}
Let $K$ be a division subring of $D$. If $g\in D\backslash K$ such that $gKg^{-1}\subseteq K$, then $K\cap (1+g)K(1+g)^{-1}=C_D(g)\cap K$.
\end{lemma}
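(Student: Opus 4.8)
The plan is to establish the two inclusions separately, and in each direction the decisive leverage is the hypothesis $g\notin K$, which lets me read off ``coefficients'' in an expression that is linear over $K$ in the symbols $1$ and $g$. First I would record a preliminary remark: since $K$ is a division subring it contains $-1$, and $g\notin K$ forces $g\neq -1$; hence $1+g$ is a nonzero element of $D$, so it is invertible and conjugation by $1+g$ makes sense.

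The inclusion $C_D(g)\cap K\subseteq K\cap (1+g)K(1+g)^{-1}$ is immediate. If $x\in K$ commutes with $g$, then it commutes with $1+g$, so $(1+g)^{-1}x(1+g)=x\in K$, which places $x$ in $(1+g)K(1+g)^{-1}$; together with $x\in K$ this gives $x\in K\cap (1+g)K(1+g)^{-1}$.

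For the reverse inclusion I would take $y\in K\cap (1+g)K(1+g)^{-1}$ and write $y=(1+g)k(1+g)^{-1}$ with $k\in K$. Clearing the denominator gives $y(1+g)=(1+g)k$, that is, $y+yg=k+gk$. This is where the invariance hypothesis enters: because $gKg^{-1}\subseteq K$, the element $k':=gkg^{-1}$ lies in $K$ and $gk=k'g$. Substituting and collecting terms yields
$$ y-k=(k'-y)g, $$
an identity in which $y-k$ and $k'-y$ both lie in $K$. If $k'-y\neq 0$, I could solve $g=(k'-y)^{-1}(y-k)\in K$, contradicting $g\notin K$; hence $k'=y$, and then $y-k=0$ as well. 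Thus $y=k=k'=gkg^{-1}=gyg^{-1}$, which says precisely $yg=gy$, so $y\in C_D(g)\cap K$, as required.

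The step I expect to be the crux---though it is short---is the re-expression $gk=(gkg^{-1})g$, since it is exactly the point at which the hypothesis $gKg^{-1}\subseteq K$ is used to convert the defining relation $y+yg=k+gk$ into a relation with $K$-coefficients in $1$ and $g$; once this is in place, the conclusion is forced by $g\notin K$. No structural input about the ambient $D$ beyond its being a division ring is needed.
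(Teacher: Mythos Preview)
Your argument is correct. The paper does not supply its own proof of this lemma; it simply quotes the result from Scott \cite[14.3.6, p.~435]{scott}, so there is no in-paper argument to compare against. Your proof is the standard one: write $y(1+g)=(1+g)k$, use $gKg^{-1}\subseteq K$ to rewrite $gk=(gkg^{-1})g$ with $gkg^{-1}\in K$, and then read off from $y-k=(gkg^{-1}-y)g$ that $g\notin K$ forces both $K$-coefficients to vanish. The preliminary remark that $g\neq -1$ (so $1+g$ is a unit) and the easy inclusion $C_D(g)\cap K\subseteq K\cap(1+g)K(1+g)^{-1}$ are handled cleanly.
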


\begin{corollary}\label{cor:10}
Let $D$ be a division ring with infinite center $F$, and assume that $N$ is a non-central almost subnormal subgroup of $D^*$. Assume that $K$ is an $N$-invariant division subring of $D$. If there exists $g\in N\backslash K$, then $$N\cap K\cap (1+g)K(1+g)^{-1}\subseteq F.$$
\end{corollary}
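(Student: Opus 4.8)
The plan is to rewrite the twisted conjugate set $(1+g)K(1+g)^{-1}$ in terms of a centralizer by means of Lemma~\ref{lem:8}, and then to use Lemma~\ref{lem:9} to kill any noncentral element that might survive in the resulting intersection. The hypotheses of Lemma~\ref{lem:8} are immediate from what is given: since $g\in N$ and $K$ is $N$-invariant, we have $gKg^{-1}=K$, so in particular $gKg^{-1}\subseteq K$; and $g\in N\backslash K$ supplies $g\in D\backslash K$. Applying Lemma~\ref{lem:8} therefore yields
$$K\cap (1+g)K(1+g)^{-1}=C_D(g)\cap K.$$
Intersecting both sides with $N$ I would obtain
$$N\cap K\cap (1+g)K(1+g)^{-1}=N\cap C_D(g)\cap K,$$
so the corollary reduces to proving $N\cap C_D(g)\cap K\subseteq F$.

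To finish, I would argue by contradiction. Suppose $a\in N\cap C_D(g)\cap K$ with $a\notin F$. Then $a\in (K\cap N)\backslash F$; in particular $K\not\subseteq F$, so $K$ is a noncentral $N$-invariant division subring and Lemma~\ref{lem:9} applies, giving $C_D(a)\subseteq K$. On the other hand, $a\in C_D(g)$ says $ag=ga$, which is exactly $g\in C_D(a)$. Combining these, $g\in C_D(a)\subseteq K$, contradicting the choice $g\in N\backslash K$. Hence no such $a$ exists, that is, every element of $N\cap C_D(g)\cap K$ lies in $F$, which is the assertion.

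I expect no genuine obstacle here: the corollary is a short bookkeeping consequence of the two structural lemmas, and the only points demanding care are verifying the hypotheses of Lemma~\ref{lem:8} (that $g$ normalizes $K$ and lies outside it) and noticing that the mere existence of a noncentral $a\in K\cap N$ already forces $K$ to be noncentral, so that Lemma~\ref{lem:9} is legitimately available. All the hard analysis --- the generalized rational and group identity machinery feeding into Proposition~\ref{pro:3} --- has already been expended in proving Lemmas~\ref{lem:7} and~\ref{lem:9}; here it is enough to translate $(1+g)K(1+g)^{-1}$ into $C_D(g)$ and invoke them.
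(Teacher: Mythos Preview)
Your proof is correct and follows essentially the same route as the paper: apply Lemma~\ref{lem:8} to rewrite $K\cap(1+g)K(1+g)^{-1}$ as $C_D(g)\cap K$, then argue by contradiction that any noncentral element $a$ in the intersection would force $g\in C_D(a)\subseteq K$ via Lemma~\ref{lem:9}. Your write-up is in fact slightly more careful than the paper's (which contains a typo in its opening assumption and leaves the noncentrality of $K$ needed for Lemma~\ref{lem:9} implicit).
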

\begin{proof}
Assume that $N\cap K\cap (1+g)K(1+g)^{-1}\subseteq F$ for every $g\in N\backslash K$. By Lemma~\ref{lem:8}, there exists $h\in (N\cap K\cap C_D(g))\backslash F$. Hence, $g\in C_D(h)$, but by Lemma~\ref{lem:9}, $C_D(h)\subseteq K$, so $g\in K$ which contradicts the assumption.
\end{proof}

Now, we are ready to prove the main result in this paper.

\begin{theorem}\label{th:1}
Let $D$ be a division ring with infinite center $F$ and $K$ be a division subring of $D$. Assume that $N$ is a non-central almost subnormal subgroup of $D^*$. If $K$ is $N$-invariant, then $K\subseteq F$ or $K=D$. 
\end{theorem}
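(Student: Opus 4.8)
The plan is to prove the dichotomy by assuming $K\not\subseteq F$ and deducing $K=D$; the case $K\subseteq F$ is then the first alternative. So suppose $K\not\subseteq F$. The decisive input is Lemma~\ref{lem:9}: there is, by Lemma~\ref{lem:6}, an element $a\in(K\cap N)\setminus F$, and for every such $a$ one has $C_D(a)\subseteq K$. I would immediately extract the consequence that no element of $D\setminus K$ can centralize a non-central element of $K\cap N$: if $g\in D\setminus K$ commuted with some $t\in(K\cap N)\setminus F$, then $g\in C_D(t)\subseteq K$, a contradiction. The whole problem is thereby reduced to the following task: on the assumption $K\neq D$, exhibit a non-central $t\in K\cap N$ that centralizes some $g\in D\setminus K$.

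Suppose then that $K\neq D$, and first treat the case $N\not\subseteq K$. Choose $g\in N\setminus K$; note $g\neq-1$ since $-1\in F\subseteq K$. Because $K$ is $N$-invariant we have $gKg^{-1}=K$, so Lemma~\ref{lem:8} applies and yields $K\cap(1+g)K(1+g)^{-1}=C_D(g)\cap K$, while Corollary~\ref{cor:10} gives $N\cap K\cap(1+g)K(1+g)^{-1}\subseteq F$. This is exactly the setting for which the words $w_r$ and the perturbed elements of Lemma~\ref{lem:5} are designed: for a fixed $h\in(K\cap N)\setminus F$ the element $\phi_r:=w_r(h,g)$ lies in $K\cap N$ (by the computation already made in Lemma~\ref{lem:6}, using $h\in K$ and $g\in N$), whereas its $(1+g)$-conjugates $u_r(h,g)=(1+g)\phi_r(1+g)^{-1}$ and $v_r(h,g)=(1+g)^{-1}\phi_r(1+g)$ lie in $N$. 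Running these elements through Lemma~\ref{lem:8} and Corollary~\ref{cor:10} is meant to force $\phi_r$ into $C_D(g)\cap K\cap N\subseteq F$, so that $w_r(h,g)$ is central; assembling this over the relevant substitutions, $N$ would satisfy a non-trivial generalized group identity, and Proposition~\ref{pro:3} would then make $N$ central, contradicting the hypothesis. Hence $N\subseteq K$.

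The remaining possibility $N\subseteq K\subsetneq D$ is genuinely different in character: there the invariance hypothesis is automatic (every $n\in N\subseteq K^*$ normalizes $K$), and there is no element of $N$ lying outside $K$ to serve as the perturbation $g$, so Lemma~\ref{lem:8} is no longer directly available. Here I would argue from the data already in hand: $N$ cannot be abelian, for an abelian $N$ satisfies the group identity $[x,y]=1$ and would be central by Proposition~\ref{pro:3}; and from $C_D(a)\subseteq K$ for all $a\in N\setminus F$ one aims to show that the sub-division-ring generated by $N$ is itself $N$-invariant, non-central, and forced to equal $D$. Since this sub-division-ring is contained in $K$, that gives $K=D$ and collapses $K\subsetneq D$. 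This last step is where I would reuse the $H_r$-construction and the Lemma~\ref{lem:7} mechanism already exploited inside the proof of Lemma~\ref{lem:9}.

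I expect the main obstacle to be precisely the reconciliation of the Cartan--Brauer--Hua perturbation with the weak invariance available here. Classically one conjugates a non-central element of $K$ by $1+g$ and plays off $(1+g)$-conjugates against $g$-conjugates; but $K$ is only guaranteed invariant under $N$, and $1+g$ never lies in $N$. Lemma~\ref{lem:5}, which records the equivariance $w_r((1+g)h(1+g)^{-1},g)=(1+g)w_r(h,g)(1+g)^{-1}$, together with Lemma~\ref{lem:8}, which converts $K\cap(1+g)K(1+g)^{-1}$ into the commuting set $C_D(g)\cap K$, are the devices that bridge this gap. The delicate point within that bridge is to arrange that the element produced is genuinely non-central, so that the resulting generalized group identity is non-trivial and Proposition~\ref{pro:3} can be applied; and, separately, to dispose of the degenerate configuration $N\subseteq K$ in which the perturbation is unavailable.
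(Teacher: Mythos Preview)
Your outline has the right ingredients but two of the load-bearing steps do not work as stated, and the endgame is missing.

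\textbf{The commutator, not $\phi_r$, lands in $F$.} You assert that Lemma~\ref{lem:8} and Corollary~\ref{cor:10} force $\phi_r=w_r(h,g)$ itself into $C_D(g)\cap K\cap N\subseteq F$. But $\phi_r\in K\cap N$ gives you only two of the three memberships needed for Corollary~\ref{cor:10}; there is no reason for $\phi_r$ to lie in $(1+g)K(1+g)^{-1}$. The paper's device is to take the commutator $\alpha=[\phi_r,u_r]$. On one hand $\alpha=\phi_r\cdot(u_r\phi_r^{-1}u_r^{-1})\in K\cap N$ since $u_r\in N$ and $K$ is $N$-invariant; on the other hand, using $\phi_r=(1+g)v_r(1+g)^{-1}$ and $u_r=(1+g)\phi_r(1+g)^{-1}$, one rewrites $\alpha=(1+g)[v_r,\phi_r](1+g)^{-1}\in(1+g)K(1+g)^{-1}$. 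Only then does Corollary~\ref{cor:10} apply, and it gives $\alpha\in F$, not $\phi_r\in F$.

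\textbf{You obtain a rational identity, not a group identity.} Having $\alpha(h,g)\in F$ for each $g\in N\setminus K$ does not immediately yield a generalized group identity on $N$: the expression $\alpha$ contains $(1+g)^{\pm1}$ and is rational. The paper feeds $g\mapsto g\,w_{r+1}(d,e)$ with $d$ ranging over $D^*$ and a fixed non-central $e\in K\cap N$ to obtain a generalized \emph{rational} identity on $D^*$, then invokes Lemma~\ref{hdb} to conclude $D$ is centrally finite. That is not the end: one still needs a separate argument in the centrally finite case, where Skolem--Noether converts $g\in N_r\setminus K$ into an inner automorphism $x\mapsto axa^{-1}$ with $a\in K$, whence $a^{-1}g\in C_D(h)\subseteq K$ by Lemma~\ref{lem:9}, a contradiction. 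Proposition~\ref{pro:3} alone does not close the loop.

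\textbf{The case $N\subseteq K$ needs induction on depth.} Your sketch here is circular: showing that the division subring $L$ generated by $N$ equals $D$ is the same problem you started with, applied to $L$ in place of $K$. The paper's mechanism is induction on $r$: once $N=N_r\subseteq K$, the subring $L$ generated by $N_r$ is $N_{r-1}$-invariant (after arranging $N_r\trianglelefteq N_{r-1}$ via the core), and $N_{r-1}$ is almost subnormal of depth $r-1$, so the inductive hypothesis gives $L=D$, hence $K=D$. Without this descent along the almost normal series you have no leverage.
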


\begin{proof}
Assume that $N$ is an almost $r$-subnormal subgroup in $D^*$ with an almost normal series
$$N=N_r\leq N_{r-1}\leq \ldots \leq N_0=D^*.$$
Observe that if $N_r$ has finite index in $N_{r-1}$, then the core $$\Core_{N_{r-1}}(N_r):=\bigcap_{x\in N_{r-1}} N_r^x$$ 
of $N_r$ in $N_{r-1}$ is a normal subgroup of finite index in $N_{r-1}$. 

We claim that $\Core_{N_{r-1}}(N_r)$ is non-central. Indeed, if $\Core_{N_{r-1}}(N_r)$ is central, then $a^n\in \Core_{N_{r-1}}(N_r)$ for any $a\in N_{r-1}$, where $n=[N_{r-1}:\Core_{N_{r-1}}(N_r)]$. Hence, $a^nb^na^{-n}b^{-n}=1$ for very $a,b\in N_{r-1}$. Moreover, $x^ny^nx^{-n}y^{-n}$ is non-identity, so $N_{r-1}$ satisfies the identity $x^ny^nx^{-n}y^{-n}=1$. By \cite[Theorem 2.2]{Pa_NgBiHa_2016}, $N_{r-1}$ is central, so is $N=N_r\subseteq F$, a contradiction.
To prove the theorem, it suffices to use the fact that $K$ is normalized by the core of $N_r$ in $N_{r-1}$. So, without loss of generality, we can assume that $N_r$ is normal in $N_{r-1}$. We shall prove the theorem firstly for the case when $K$ is centrally finite, that is, when $K$ is a finite dimensional vector space over its center $Z(K)$, and then for the general case.

\bigskip
\noindent
{\it Case 1. $K$ is centrally finite.}
 \bigskip

We shall prove the statement by induction on $r$. If $r=0$, then $K^*$ is normal in $D^*$, and the statement is true by the Carter-Brauer-Hua Theorem. Assume that the statement holds for any almost subnormal subgroup in $D^*$ of depth $<r$. Assuming $K\not\subseteq F$, we must show $K=D$. In view of Lemma~\ref{lem:7}, $K$ is not commutative. We claim that $Z(K)$ is contained in $F$. Indeed, for any $x\in Z(K)$, $g\in N_r$ and $h\in K$, since $gKg^{-1}=K$, there exists $h'\in K$ such that $h=gh'g^{-1}$. One has $$(gxg^{-1})h=gxg^{-1}gh'g^{-1}=gxh'g^{-1}=gh'xg^{-1}=gh'g^{-1} gxg^{-1}=h(gxg^{-1}).$$ Hence, $gxg^{-1}\in Z(K)$ for any $x\in Z(K)$ and $g\in N_r$. This means that $Z(K)$ is $N_r$-invariant. By Lemma~\ref{lem:7}, $Z(K)\subseteq F$. The claim is proved. Since $K\not\subseteq F$, by Lemma~\ref{lem:6}, there exists an element $h\in (K\cap N)\backslash F$. Assume that $N_r\not\subseteq K$. Then, there exists $g\in N_r\backslash K$. Consider the morphism $\psi : K\to K$ defined by $\psi(x)=gxg^{-1}$ for any $x\in K$. This morphism is a $Z(K)$-automorphism of $K$ in view of the inclusion $Z(K)\subseteq F$. Since $K$ is finite dimensional over $Z(K)$, by the Skolem-Noether Theorem, there exists $a\in K^*$ such that $\psi(x)=axa^{-1}$ for any $x\in K$. In particular, $ghg^{-1}=aha^{-1}$, equivalently, $(a^{-1}g)h=h(a^{-1}g)$. Hence, $a^{-1}g\in C_D(h)$, but in view of Lemma~\ref{lem:9}, $C_D(h)\subseteq K$, so we have $g\in K$, a contradiction. Thus, $N_r\subseteq K$. As a corollary, the division subring $L$ of $D$ generated by $N_r$ is contained in $K$ and $L$ is $N_{r-1}$-invariant. By the inductive, $L=D$, and consequently, $K=D$.

\bigskip
\noindent{\it Case 2. General case.}
\bigskip

As in {\it Case 1}, we shall prove the statement by induction on $r$. If $r=0$, then the statement is true by the Carter-Brauer-Hua Theorem. Assume that the statement holds for any almost subnormal subgroup of $D$ of depth $<r$. Assuming $K\not\subseteq F$, we must show $K=D$. We claim that $N_r\subseteq K$. Assume that $N_r\not \subseteq K$. Then, take $g\in N_r\backslash K$, and by Lemma~\ref{lem:6}, let $h\in (K\cap N_r)\backslash F$. Consider $u=u_r(h,g)=(1+g)\phi_n(h,g) (1+g)^{-1}\in N_r$ and $v=v_r(h,g)=(1+g)^{-1}\phi_n(h,g) (1+g)\in N_r$ as in Lemma~\ref{lem:5}. One has $$\alpha(h,g)=[\phi_n(h,g),u]=[(1+g)v(1+g)^{-1}, (1+g)\phi_n(h,g)(1+g)^{-1}]$$$$=(1+g)[v,\phi_n(h,g)](1+g)^{-1}\in (1+g)K(1+g)^{-1}.$$ On the other hand $\alpha(h,g)=[\phi_n(h,g), u]\in [K\cap N_r, N_r]\subseteq K\cap N_r.$ Hence, $\alpha(h,g) \in N_r\cap K\cap (1+g)K(1+g)^{-1}$. In view of Corollary~\ref{cor:10}, $\alpha(h,g)\in F$. Observe that for any $g'\in K\cap N_r$, $gg'\in N_r\backslash K$, so $\alpha(h,gg')\in F\cap K$. Let us fix some element $e\in (K\cap N_r)\backslash F$, and recall elements $w_n(a,b)$ which have been constructed before Lemma~\ref{lem:5} for $a,b\in D^*$ and $n\geq 0$. We claim that $\alpha (h,gw_{r+1}(d,e))\in F\cap K\subseteq Z(K)$ for any $d\in D^*$. To do this, it suffices to show that $w_{r+1}(d,e)\in N_r\cap K$ for any $d\in D^*$. Recall that $e\in N_r\cap K$, so by a remark before Lemma~\ref{lem:5}, we have $w_r(d,e)\in N_r$ for any $d\in D^*$. Moreover, since $K$ is $N_r$-invariant, we also have 
$$w_{r+1}(d,e)=[w_r(d,e),e]=w_r(d,e)e(w_r(d,e))^{-1}e^{-1}\in K\cap N_r.$$ 
Thus, we have shown that $\alpha(h,gw_r(p,e))\in F$ for every $c\in D^*$. Hence, $$\alpha(h,gw_r(p,e))q-q\alpha(h,gw_r(p,e))=0$$ for every $p,q\in D^*$. Since $\alpha(h,gw_r(x,e))y-y\alpha(h,gw_r(x,e))$ is non-zero in $D(x,y)$, $D^*$ satisfies generalized rational identity $\alpha(h,gw_r(x,e))y-y\alpha(h,gw_r(x,e))=0$.
Therefore, by Lemma~\ref{hdb}, $D$ is centrally finite, so in view of \cite[Theorem 3]{hai-ngoc}, $K$ is centrally finite. By Case 1, $K=D$. But this fact contradicts the assumption that $N_r\not\subseteq K$. Thus, $N_r\subseteq K$, and the claim is proved. Now, as in Case 1, we conclude that the division subring $L$ of $D$ generated by $N_r$ is $N_{r-1}$-invariant, so $L=D$ by the inductive, and this implies $K=D$. The proof of the theorem is now complete.
\end{proof}

\bigskip

\noindent

\end{document}